\numberwithin{equation}{section}
\newcommand{\Z}{\mathbb{Z}}
\newcommand{\N}{\mathbb{N}}
\newcommand{\R}{\mathbb{R}}
\newcommand{\T}{\mathbb{T}}
\newcommand{\G}{\mathcal{G}}
\providecommand{\ip}[1]{\langle#1\rangle}
\providecommand{\abs}[1]{\left\lvert#1\right\rvert}
\providecommand{\norm}[1]{\left\|#1\right\|}
\newtheorem{theorem}{Theorem}[section]
\newtheorem{lemma}[theorem]{Lemma}
\newtheorem{corollary}[theorem]{Corollary}
\newtheorem{proposition}[theorem]{Proposition}
\theoremstyle{definition}
\newtheorem{definition}[theorem]{Definition}
\newtheorem*{acknowledgements}{Acknowledgements}
\theoremstyle{remark}
\newtheorem{remark}[theorem]{Remark}
\begin{document}

\title{The Surface Quasi-Geostrophic Equation with Random Diffusion}
\author[T. Buckmaster]{Tristan Buckmaster}
\address{Department of Mathematics, Princeton University,  Fine Hall, Princeton, NJ 08544 USA}
\email{buckmaster@math.princeton.edu.}
\author[A. Nahmod]{Andrea Nahmod}
\address{Department of Mathematics, University of Massachusetts, 710 N.\ Pleasant Street, Am\-herst, MA 01003 USA}
\email{nahmod@math.umass.edu}
\author[G. Staffilani]{Gigliola Staffilani}
\address{Department of Mathematics,  Massachusetts Institute of Technology,  77 Massachusetts Ave,  Cambridge,  MA 02139-4307 USA}
\email{gigliola@math.mit.edu}
\author[K. Widmayer]{Klaus Widmayer}
\address{Institute of Mathematics, EPFL, Station 8, 1015 Lausanne, Switzerland}
\email{klaus.widmayer@epfl.ch}

\subjclass[2010]{35Q35, 76B03}
\thanks{T.B.\ was partially supported by NSF-DMS-1600868.  A.N.\ was partially supported by NSF-DMS-1463714. G.S.\ was partially supported by NSF-DMS-1362509 and NSF-DMS-1462401,  the Simons Foundation and the John Simon Guggenheim Foundation. K.W.\ was partially supported by SNSF Grant 157694.}

\begin{abstract}
Consider the surface quasi-geostrophic equation with random diffusion, white in time. We show global existence and uniqueness in high probability for the associated Cauchy problem satisfying a Gevrey type bound. This article is inspired by recent work of Glatt-Holtz and Vicol \cite{MR3161482}.
\end{abstract}

\maketitle

\section{Introduction}
In this article we describe an instance of \emph{regularization via noise} for a nonlinear partial differential equation, namely the \emph{surface quasi-geostrophic (SQG)} equation. The principle underlying regularization via noise is that for certain systems, ill-posedness or blow up may be rare, and thus considering additional noise could potentially transform a system that is deterministically ill-posed into a system that is probabilistically well-posed. 
This is in contrast to the idea that noise introduces irregularities, which can render control of the systems more difficult or weaker than in the deterministic case. 

Regularization via noise is well known to occur in some examples of nonlinear ordinary equations (e.g.\ \cite{MR1202675}). For (nonlinear) partial differential equations less is known, and we refer the reader to the book of Flandoli \cite{MR2796837} for an introduction and some overview. In general, depending on the circumstances noise may regularize \cite{DEBUSSCHE2011363,Chouk,MR2593276} or singularize \cite{MR1947366, debouard2005} a given system. 

Our object of study is an SQG equation for $\theta:\R\times\T^2\to\R$ with a random in time diffusion (or multiplicative noise), written as an It\^{o} stochastic integral:
\begin{equation}\label{eq:SQG-prob}
 d\theta +R^\perp\theta\cdot\nabla\theta dt=\nu\abs{\nabla}^s\theta\, dW_t,\quad \theta(0)=\theta_0.
\end{equation}
Here $R^\perp$ is the vector of Riesz transforms $R^\perp=(-R_2,R_1)$ and $W_t$ is a standard Wiener process in time. In particular, we note that the term on the right hand side of \eqref{eq:SQG-prob} does not have a definite sign. The index $s$ of differentiability with $0<s\leq 1$ and the constant of diffusion $\nu>0$ are regarded as fixed. 

This is a stochastic version of the well-known SQG equation, which under the quasi-geostrophic approximation describes the evolution of the surface temperature $\theta$ for rapidly rotating geophysical fluids with small Rossby and Ekman numbers  \cite{Pe1982}.   The inviscid SQG equation
\begin{equation}\label{e:inviscid}
 \partial_t\theta+R^\perp\theta\cdot\nabla\theta=0
\end{equation}
 has also been suggested as a mathematical model to study singularity formation for the 3D Euler equations \cite{CoMaTa1994}. 
 
Global well-posedness for the inviscid SQG equation \eqref{e:inviscid} is a major open problem in the field of mathematical fluid dynamics \cite{MR2338368}. In fact, only local well-posedness is known \cite{CoMaTa1994} and except for specific examples, the only solutions of \eqref{e:inviscid} that are known to exist globally are weak solutions \cite{Resnick95,Ma2008}. However, we note that certain classes of weak solutions are known to be non-unique \cite{BSV-SQG}. 

Contrasting strongly with this, in the present article we will show that for initial data satisfying a certain bound depending on the constants $s$ and $\nu$, the \emph{Cauchy problem to the stochastic SQG \eqref{eq:SQG-prob} has a unique global solution with high probability}. 

Our main result can then be stated as follows:
\begin{theorem}\label{thm:main_short}
 Let $\sigma\in (\frac{1}{s},2)$ and $\theta_0\in \dot{H}^{\sigma s}$ be given with $\hat{\theta}_0(0)=0$ and
 \begin{equation}\label{eq:init_cond0}
  \norm{e^{\alpha \abs{\nabla}^{s}}\,\theta_0}_{\dot{H}^{\sigma s}}=:E<\frac{\nu^2}{2}
 \end{equation}
 for some $\alpha>0$.
 
 Then there exists $\beta=\beta(\nu,E)<E$ such that for $\epsilon\in (0,\alpha)$ we obtain a pathwise unique, global solution $\theta\in C^0\dot{H}^{\sigma s}$ of \eqref{eq:SQG-prob} with probability $$p(\alpha,\epsilon,\nu,E)\geq 1-e^{-2\frac{(\alpha-\epsilon)\beta}{\nu^2}}.$$

 In particular, for any $p_0\in (0,1)$ and $E>0$ given, there exist $\nu>0$ and $\alpha>0$ such that if initial data $\theta_0$ satisfy \eqref{eq:init_cond0} for these values we obtain a pathwise unique global solution with likelihood at least $p_0$.
\end{theorem}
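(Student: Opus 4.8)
The plan is to follow \cite{MR3161482}: first absorb the multiplicative noise by an It\^o change of variables, turning \eqref{eq:SQG-prob} into a \emph{pathwise} parabolic equation; then propagate a Gevrey-type bound along it; and only at the end invoke one property of the Wiener path to read off the probability. Concretely, set $\theta = M(t)\,\Theta$ with the random Fourier multiplier $M(t):=e^{\nu W_t\abs{\nabla}^s}$ (symbol $e^{\nu W_t\abs{\xi}^s}$). By It\^o's formula $dM = \nu\abs{\nabla}^s M\,dW_t + \tfrac12\nu^2\abs{\nabla}^{2s}M\,dt$, and requiring $\Theta$ to carry no martingale part one is led to
\[
 \partial_t\Theta + \tfrac12\nu^2\abs{\nabla}^{2s}\Theta = -M(t)^{-1}\bigl[R^\perp(M(t)\Theta)\cdot\nabla(M(t)\Theta)\bigr],\qquad \Theta(0)=\theta_0 .
\]
The It\^o correction has produced a genuine fractional dissipation $\tfrac12\nu^2\abs{\nabla}^{2s}$ --- which has a sign even though the noise did not --- at the price of a quadratic term carrying $M(t)$, whose symbol amplifies high frequencies when $W_t>0$. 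For each frozen Brownian path this is a deterministic equation, to be solved by a fixed-point argument in a Gevrey space.

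For the global bound, introduce a weight $\tau(t)>0$ and track $X(t):=\norm{e^{\tau(t)\abs{\nabla}^s}\Theta(t)}_{\dot H^{\sigma s}}$. Apply $e^{\tau\abs{\nabla}^s}$ and pair in $\dot H^{\sigma s}$: the $\dot\tau\,\abs{\nabla}^s$ term from $\partial_t e^{\tau\abs{\nabla}^s}$ is absorbed into the $\abs{\nabla}^{2s}$ dissipation by interpolation, up to a remainder $\sim \nu^{-2}\dot\tau^2 X^2$; the SQG nonlinearity is handled by the usual commutator/product estimates in $\dot H^{\sigma s}$, which close against the dissipation in the stated range of $\sigma$ (in particular $s>\tfrac12$, making $(\tfrac1s,2)$ nonempty, and $\sigma s>1$, i.e.\ subcriticality); and the amplifying factor $M(t)$ in the nonlinearity is neutralised once $\tau(t)\ge\nu W_t$, since then subadditivity $\abs\xi^s\le\abs\eta^s+\abs{\xi-\eta}^s$ (valid for $s\le1$) lets the surplus factor $e^{(\tau-\nu W_t)\abs\xi^s}$ be split over the two input frequencies and recombined with the $e^{\nu W_t\abs\cdot^s}$ factors into a clean $e^{\tau\abs\cdot^s}$ on each --- reducing the nonlinear term to the ordinary SQG trilinear expression for $v:=e^{\tau\abs{\nabla}^s}\Theta$. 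Using also that all frequencies are $\ge1$ (zero-mean data on $\T^2$), one arrives at a differential inequality of the shape
\[
 \tfrac{d}{dt}X^2 \le -\bigl(\tfrac{\nu^2}{2}-cX\bigr)\norm{e^{\tau\abs{\nabla}^s}\Theta}_{\dot H^{\sigma s+s}}^2 + \tfrac{c\,\dot\tau^2}{\nu^2}\,X^2 ,
\]
so that under \eqref{eq:init_cond0}, which says exactly $X(0)=E<\nu^2/2$, one bootstraps $X(t)\le E$ for all $t$ provided $\dot\tau$ stays below a fixed threshold $\beta=\beta(\nu,E)>0$ --- this is the $\beta$ of the statement. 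Take $\tau(t)=\alpha+\beta t$ (legitimate: $X(0)=E$ means $\theta_0$ has Gevrey radius $\alpha$); then $X(t)\le E$ holds globally on exactly the event $\{\nu W_t\le\alpha-\epsilon+\beta t\ \text{for all}\ t\ge0\}$, and there $e^{\epsilon\abs{\nabla}^s}\theta = e^{(\epsilon+\nu W_t-\tau(t))\abs{\nabla}^s}e^{\tau(t)\abs{\nabla}^s}\Theta$ is bounded in $\dot H^{\sigma s}$, giving $\theta\in C^0\dot H^{\sigma s}$ with the claimed Gevrey bound of radius $\epsilon$.

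The remaining steps are quick. The good event is $\{\sup_{t\ge0}(W_t-\tfrac\beta\nu t)\le\tfrac{\alpha-\epsilon}\nu\}$; since the running maximum of a Brownian motion with drift $-\beta/\nu$ is exponential with rate $2\beta/\nu$ (optional stopping on the martingale $e^{2(\beta/\nu)W_t-2(\beta/\nu)^2 t}$), this event has probability $1-e^{-2(\alpha-\epsilon)\beta/\nu^2}$, which is the asserted $p(\alpha,\epsilon,\nu,E)$. Local existence and pathwise uniqueness come from the Gevrey-space fixed point, and combined with the global a priori bound on the good event they give the pathwise-unique global solution of the first part. The ``in particular'' clause is then just bookkeeping: given $E>0$, fix any $\nu$ with $\nu^2>2E$ so that \eqref{eq:init_cond0} is not vacuous; then $\beta=\beta(\nu,E)>0$ is a definite number, and with $\epsilon=\alpha/2$ the probability is $\ge 1-e^{-\alpha\beta/\nu^2}$, which exceeds $p_0$ once $\alpha\ge\tfrac{\nu^2}{\beta}\log\tfrac1{1-p_0}$ --- these are the desired $\nu,\alpha$.

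I expect the middle step to be the main obstacle: for $s<1$ the weight $\tau$ is pulled three ways at once --- it must grow fast enough to stay above $\nu W_t$ (to tame $M(t)$), slowly enough that the $\nu^{-2}\dot\tau^2X^2$ remainder does not break the bootstrap, and all of this must remain compatible with keeping the quadratic term under the $\nu^2/2$ threshold from \eqref{eq:init_cond0}. Packaging these competing requirements into one clean differential inequality, with $\beta$ quantified in terms of $\nu$ and $E$, is where the real work lies; the It\^o substitution, the Brownian computation, and the final choice of $\nu,\alpha$ are routine by comparison.
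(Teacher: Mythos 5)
Your overall route is the paper's own: the same change of variables $u=\Gamma\theta=e^{-\nu W_t\abs{\nabla}^s}\theta$ producing a pathwise deterministic equation with dissipation $\tfrac12\nu^2\abs{\nabla}^{2s}$, a contraction in Gevrey spaces for local existence, a Gevrey energy estimate with linearly growing radius $\phi(t)=\alpha+\beta t$ valid while $\phi(t)-\nu W_t\ge 0$, the drift estimate $\mathbb{P}\bigl[\exists t:\ W_t-\tfrac{\beta}{\nu}t>\tfrac{\alpha-\epsilon}{\nu}\bigr]=e^{-2(\alpha-\epsilon)\beta/\nu^2}$, and the boundedness of $\Gamma^{-1}$ on the good event to return to $\theta$. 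Your treatment of the $\dot\tau$-term by Young's inequality (remainder $\nu^{-2}\dot\tau^2X^2$, then frequencies $\ge 1$) is an equivalent variant of the paper's direct absorption $\abs{k}^s\le\abs{k}^{2s}$, and leads to the same smallness condition $E\le\tfrac{\nu^2}{2}-\beta$ up to constants; the subadditivity argument taming $e^{\nu W_t\abs{\nabla}^s}$ in the nonlinearity is identical.

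The genuine gap is in how the local theory and the energy estimate are glued into a global solution. The energy inequality is only formal at the regularity the fixed point provides: to pair the equation with $e^{\phi A^{1/2}}A^\sigma u$ and to have the dissipation term $\norm{e^{\phi A^{1/2}}u}_{\dot H^{(\sigma+1)s}}$ finite, you need the solution bounded in $\G^{\sigma+1}_\phi$, one full level above the space $C^0\G^\sigma_\phi$ in which the contraction produces it, and the local existence time at that higher level is a priori shorter, so the naive continuation does not close. This is precisely the difficulty the paper isolates in its strategy of proof and resolves by spending the excess radius $\epsilon$ of the data: the embedding $\norm{v}_{\G^{\rho+1}_{\gamma}}\le\delta^{-1}\norm{v}_{\G^{\rho}_{\gamma+\delta}}$ trades a small loss of Gevrey radius for one extra derivative, and an iteration with geometrically decreasing losses $\epsilon_j=2^{-j}\epsilon$ (so all radii stay above $\phi$) lets the energy estimate be applied on every step of length $T=T(\norm{u_0}_{\G^\sigma_{\alpha+\epsilon}})$. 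In your sketch the $\epsilon$ is spent only at the very end, to retain a Gevrey radius $\epsilon$ for $\theta$; the bootstrap in the middle implicitly assumes the higher norm is finite on each interval, and the obstacle you flag (competing requirements on $\tau$) is not the real one, since with $\tau$ linear and $\beta<\tfrac{\nu^2}{2}-E$ those requirements are trivially compatible. Supplying the two-regularity bookkeeping --- by the paper's $\epsilon_j$-iteration, or alternatively by exploiting the leftover parabolic smoothing --- is what your argument still needs.
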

For the more detailed statement of the result we refer the reader to Theorem \ref{thm:main_full} and its Corollary \ref{cor:main}.

The proof of Theorem \ref{thm:main_short} relies on a transformation which converts the stochastic SQG equation \eqref{eq:SQG-prob} into a new  \emph{pathwise deterministic equation with diffusion}.  
This transformation is inspired by the use of geometric Brownian motion in \cite{MR3161482}.

One might also compare Theorem \ref{thm:main_short} to the case of the viscous SQG equation
\begin{equation}\label{e:viscous SQG}
 \partial_t\theta+R^\perp\theta\cdot\nabla\theta=-\nu\abs{\nabla}^s\theta
\end{equation}
for $s>0$, where for small data, global existence is known \cite{MR1962043,MR2084005}.\footnote{For the critical, and subcritical cases $s\geq\frac{1}{2}$ unconditional global existence is known \cite{MR1709781,KiNaVo2007,CaVa2010}.} However, we highlight that in contrast to \eqref{e:viscous SQG}, the righthand side of \eqref{eq:SQG-prob} does \emph{not} have a sign and as such \emph{does not a-priori regularize the solution}. 

The potential for the noise to create singularities in the solution motivates the use of Gevrey spaces in order to account for potential loss of regularity. As such, we will employ energy techniques in the spirit of \cite{MR1026858}  (see also \cite{MR1719695}) on the transformed equation.

\subsection*{Overview of the Article}
We start the presentation by demonstrating in Section \ref{sec:transf} how the stochastic SQG equation \eqref{eq:SQG-prob} can be transformed into a pathwise deterministic equation with diffusion. This motivates the framework and techniques of the rest of the paper, and we include the relevant setup in that section. In particular, the Gevrey type spaces we work in are defined there.

The following sections work with the transformed equation \eqref{eq:SQG-det}, and establish first local well-posedness (Proposition \ref{prop:local} in Section \ref{sec:lwp}) and then energy estimates (Proposition \ref{prop:en_est} in Section \ref{sec:en_est}) in Gevrey spaces. Since pathwise the transformed equation is deterministic, these arguments are not probabilistic in nature: thanks to the diffusion present in \eqref{eq:SQG-det} we can obtain local well-posedness using a contraction argument, and the energy estimates proceed as is common for such equations.

A probabilistic estimate for Brownian motion with drift (Lemma \ref{lem:drift}) informs our global result in Section \ref{sec:global}. Taking this into account, in our Theorem \ref{thm:main_full} we construct global solutions (for the transformed equation) by an iteration that combines the local existence theory and the energy estimates. Here one has to carefully track the precise structure of the estimates and regularities (for a more precise explanation see the ``strategy of proof'' on page \pageref{pf:strat-proof}).

Finally, Corollary \ref{cor:main} translates this result for the transformed equation \eqref{eq:SQG-det} back to the original equation \eqref{eq:SQG-prob}, thereby establishing the claim in Theorem \ref{thm:main_short}.

\begin{acknowledgements}
The authors would like to thank the hosts and organizers of the workshop ``Mathematical questions in wave turbulence theory'' in May 2017 at the American Institute of Mathematics, during which the initial plans for this project formed.
\end{acknowledgements}
\section{Transformation of the Equation}\label{sec:transf}
Recall the equation \eqref{eq:SQG-prob}
\begin{equation*}
 d\theta +R^\perp\theta\cdot\nabla\theta dt=\nu\abs{\nabla}^s\theta\, dW_t,\quad \theta(0)=\theta_0.
\end{equation*}
Setting $\Gamma(t):=e^{-\nu W_t\abs{\nabla}^s}$ we have $d\Gamma=-\nu\abs{\nabla}^s\Gamma dW_t+\frac{1}{2}\nu^2\abs{\nabla}^{2s}\Gamma dt$, and thus by It\^{o}'s product rule and commutativity of $\Gamma$ with $\abs{\nabla}^s$ we arrive at the following equation for $u(t):=\Gamma(t)\theta(t)$:
\begin{equation*}
  du=d\Gamma\cdot\theta +\Gamma\cdot d\theta+d\Gamma\cdot d\theta=-\Gamma\left(R^\perp\Gamma^{-1}u\cdot\nabla\Gamma^{-1}u\right)dt-\frac{1}{2}\nu^2\abs{\nabla}^{2s}u dt,
\end{equation*}
i.e.
\begin{equation}\label{eq:SQG-det}
\begin{cases}&\partial_t u+\Gamma\left(R^\perp\Gamma^{-1}u\cdot\nabla\Gamma^{-1}u\right)=-\frac{1}{2}\nu^2\abs{\nabla}^{2s}u,\\ 
 &u(0)=u_0=\theta_0.
\end{cases} 
\end{equation}
Remarkably, this is a \emph{pathwise deterministic} equation, i.e.\ for a fixed instance of the Brownian motion $t\mapsto W_t(\omega)$ we have a deterministic equation. In what follows, we will work on \eqref{eq:SQG-det} and prove first local existence, energy estimates and finally global existence of solutions in this pathwise deterministic setting. It is only at the very end of Section \ref{sec:global} (see Corollary \ref{cor:main}) that we come back to the original equation \eqref{eq:SQG-prob}.

\begin{remark}
 Upon integration of the equation \eqref{eq:SQG-det}, we obtain $\int u(t) dx=const$, so that we may -- and henceforth will -- assume without loss of generality that $\int udx=\hat{u}(0)=0$.\footnote{In particular, homogeneous and inhomogeneous Sobolev spaces on $\T^2$ agree and we have canonical inclusions $H^a(\T^2)\subset H^b(\T^2)$ with $a>b$ for such functions.}
\end{remark}

\subsection{Gevrey Setup}
In what follows we will use the notation $B(f,g)$ for the bilinear term
\begin{equation*}
 B(f,g):=\Gamma\left(R^\perp\Gamma^{-1}f\cdot\nabla\Gamma^{-1}g\right).
\end{equation*}
We write equation \eqref{eq:SQG-det} schematically as
\begin{equation}\label{eq:SQG-det2}
 \partial_t u +B(u,u)=-\frac{1}{2} \nu^2Au
\end{equation}
with $A:=\abs{\nabla}^{2s}$. Following Foias-Temam \cite{MR1026858} we proceed to work with $L^2$ based norms weighted with $e^{\phi(t) A^{1/2}}A^{m}$, where $m\in\{\frac{1}{2},1\}$. Here $\phi(t)$ needs to be suitably chosen, and we will always consider it to be of the form
\begin{equation}\label{eq:phi-def}
 \phi(t)=\alpha+\beta t\text{ with }\alpha>0 \text{ and }0<\beta<\frac{\nu^2}{2},
\end{equation}
or shifts $\phi(t-T_0)$ thereof. The meaning and the conditions for the parameters $\alpha,\beta$ will become clear later (see Propositions \ref{prop:local} and \ref{prop:en_est}), but for now let us emphasize the upper bound $\beta<\frac{\nu^2}{2}$.

We end this section with the definition of the Gevrey spaces that we will construct our solutions in:
\begin{definition}\label{def:gevrey}
 Let $\gamma>0$ and $\varphi$ be a continuous real function. Then for a map $w:[0,T]\times\T^2\to\R$ we let
 \begin{equation}
  \norm{w(t)}_{\G^{\gamma}_{\varphi(t)}}:=\norm{e^{\varphi(t)A^{1/2}}w(t)}_{\dot{H}^{\gamma s}}
 \end{equation}
 and define the corresponding Gevrey space as
 \begin{equation}
  \G^{\gamma}_\varphi:=\{f\in\dot{H}^{\gamma s}:\,\norm{f}_{\G^{\gamma}_\varphi}<\infty\}.
 \end{equation}
From this one obtains the combined space-time spaces
 \begin{equation}
  C^0_T\G^\gamma_\varphi:=\{f\in C^0([0,T],\G^\gamma_\varphi):\,\norm{f}_{C^0_T\G^\gamma_\varphi}<\infty\}.
 \end{equation}
\end{definition}

\section{Local Well-Posedness}\label{sec:lwp}
In this section we discuss the construction of solutions to \eqref{eq:SQG-det} via the contraction mapping principle.

\begin{proposition}\label{prop:local}
 Let $T_0>0$ and $I\subset\R$ be an open interval with $T_0\in I$ as well as $\phi(t)-\nu W_t\geq 0$ for $t\in I$. Fix $\sigma>0$ with $\sigma\in(\frac{1}{s},2)$. Then for mean zero initial data $u_0$ with $e^{\phi(T_0) A^{1/2}}u_0\in\dot{H}^{\sigma s}$ there exist $T>0$ and a unique solution $u\in C^0(I\cap [T_0,T_0+T],\G^\sigma_\phi)$ of \eqref{eq:SQG-det}.
 
 Here $T=T(\|e^{\phi(T_0) A^{1/2}}u_0\|_{\dot{H}^{\sigma s}})$ depends only on the norm $\|e^{\phi(T_0) A^{1/2}}u_0\|_{\dot{H}^{\sigma s}}=\norm{u_0}_{\G^\sigma_{\phi(T_0)}}$ of the initial data.
\end{proposition}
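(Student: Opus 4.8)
The plan is to set up a fixed-point iteration for the Duhamel (mild) formulation of \eqref{eq:SQG-det} in the space $C^0([T_0,T_0+T],\G^\sigma_\phi)$ and close a contraction for $T$ small. Writing $S(\tau)=e^{-\frac{1}{2}\nu^2\tau A}$ for the semigroup generated by the diffusion $-\frac{1}{2}\nu^2 A=-\frac{1}{2}\nu^2|\nabla|^{2s}$, the equation becomes
\begin{equation*}
 u(t)=S(t-T_0)u_0-\int_{T_0}^t S(t-\tau)B(u,u)(\tau)\,d\tau.
\end{equation*}
The key point is that conjugating by the Gevrey weight $e^{\phi(t)A^{1/2}}$ produces, from the diffusion semigroup, a \emph{gain} that controls the commutator cost of the weight: schematically $e^{\phi(t)A^{1/2}}S(t-\tau)e^{-\phi(\tau)A^{1/2}}$ acts (on frequency $\xi$) like $e^{(\phi(t)-\phi(\tau))|\xi|^s-\frac{1}{2}\nu^2(t-\tau)|\xi|^{2s}}$, and since $\phi(t)-\phi(\tau)=\beta(t-\tau)$ with $\beta<\nu^2/2$, Young's inequality $\beta(t-\tau)|\xi|^s\le \frac{1}{4}\nu^2(t-\tau)|\xi|^{2s}+C(t-\tau)$ absorbs the weight growth into half the parabolic smoothing, leaving a genuine smoothing factor $e^{-\frac{1}{4}\nu^2(t-\tau)|\xi|^{2s}}$. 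This is exactly why the bound $\beta<\nu^2/2$ in \eqref{eq:phi-def} is imposed. The remaining smoothing factor $|\xi|^{s}e^{-c(t-\tau)|\xi|^{2s}}\lesssim (t-\tau)^{-1/2}$ supplies the half-derivative needed to absorb the $\nabla$ in the nonlinearity, and the resulting time singularity $(t-\tau)^{-1/2}$ is integrable, yielding a factor $T^{1/2}$ (or a small power of $T$) in front of the nonlinear term.

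The steps, in order, are as follows. First I would record the linear estimate: for $t\ge T_0$, $\norm{S(t-T_0)u_0}_{\G^\sigma_\phi}\lesssim \norm{u_0}_{\G^\sigma_{\phi(T_0)}}$, using the weight/semigroup computation above; note the hypothesis $\phi(t)-\nu W_t\ge 0$ on $I$ is what makes the weights $e^{\phi A^{1/2}}$ and the operators $\Gamma^{\pm1}=e^{\mp\nu W_t A^{1/2}}$ compatible, i.e.\ $e^{\phi(t)A^{1/2}}\Gamma(t)^{-1}=e^{(\phi(t)-\nu W_t)A^{1/2}}$ is still a bounded (indeed regularizing) Fourier multiplier, which is essential for estimating $B$. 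Second, the bilinear estimate: I claim
\begin{equation*}
 \norm{\int_{T_0}^t S(t-\tau)B(f,g)(\tau)\,d\tau}_{C^0_T\G^\sigma_\phi}\lesssim T^{\delta}\,\norm{f}_{C^0_T\G^\sigma_\phi}\norm{g}_{C^0_T\G^\sigma_\phi}
\end{equation*}
for some $\delta>0$. To prove it, conjugate everything by $e^{\phi(t)A^{1/2}}$; writing $F=e^{\phi A^{1/2}}\Gamma^{-1}f$ and similarly for $g$, the nonlinearity in Fourier space is a convolution whose kernel carries the factor $e^{\phi(t)|\xi|^s-\phi(\tau)|\eta|^s-\phi(\tau)|\xi-\eta|^s}$; using $|\xi|^s\le |\eta|^s+|\xi-\eta|^s$ (subadditivity of $|\cdot|^s$ for $0<s\le1$) and $\phi(t)\ge\phi(\tau)$ one bounds this by $e^{\beta(t-\tau)|\xi|^s}$, which is then absorbed as above. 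The product estimate in $\dot H^{\sigma s}(\T^2)$ — valid because $\sigma s>1$, so $\dot H^{\sigma s}$ is an algebra on $\T^2$ and also controls $\dot H^{\sigma s-1}$ after the $\nabla$ costs one derivative and the semigroup returns it — together with the Riesz transform boundedness completes the estimate; the condition $\sigma<2$ keeps $\sigma s-1 < s$, matching the half-plus-a-bit of smoothing available so that no derivative loss occurs. Third, standard contraction: for $T=T(\norm{u_0}_{\G^\sigma_{\phi(T_0)}})$ chosen so that $CT^\delta R<\tfrac12$ with $R=2C\norm{u_0}_{\G^\sigma_{\phi(T_0)}}$, the map $u\mapsto S(\cdot-T_0)u_0-\int S(\cdot-\tau)B(u,u)$ is a contraction on the ball of radius $R$ in $C^0([T_0,T_0+T]\cap I,\G^\sigma_\phi)$, giving existence and uniqueness; continuity in time of the solution is read off from the Duhamel formula and dominated convergence. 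Finally, I would note $T$ depends only on $\norm{u_0}_{\G^\sigma_{\phi(T_0)}}$ because the constant $C$ in the bilinear estimate depends only on $s,\sigma,\nu$ and $\sup_{t\in I}(\phi(t)-\nu W_t)$ enters only through the harmless sign condition — one should remark that on the \emph{closed} subinterval $I\cap[T_0,T_0+T]$ the path $W_t$ is bounded, so all implied constants are finite.

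The main obstacle is the bilinear estimate, specifically bookkeeping the three competing exponential weights — the time-dependent Gevrey weight $e^{\phi(t)A^{1/2}}$, the stochastic conjugation $\Gamma^{\pm1}$, and the diffusion semigroup — simultaneously in the convolution integral, and verifying that subadditivity of $|\xi|^s$ together with $\beta<\nu^2/2$ leaves a net smoothing sufficient to both absorb the derivative in $\nabla\Gamma^{-1}g$ and produce an integrable-in-time, positive power of $T$. Everything else (the linear bound, the algebra property for $\sigma s>1$, the abstract contraction) is routine; the delicate point is that the margin $\tfrac{\nu^2}{2}-\beta>0$ must be spent carefully so that a uniform smoothing rate survives, and that the sign hypothesis $\phi-\nu W_t\ge0$ is exactly what is needed — and sufficient — to keep $\Gamma^{-1}$ from destroying this structure.
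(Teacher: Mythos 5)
Your overall architecture --- the Duhamel/mild formulation, contraction in $C^0_T\G^\sigma_\phi$, subadditivity $\abs{k}^s\le\abs{k-j}^s+\abs{j}^s$ combined with $\phi(t)-\nu W_t\ge 0$ to neutralize $\Gamma^{\pm1}$, and $\beta<\tfrac{\nu^2}{2}$ to retain a net parabolic gain --- is exactly the paper's. The problem is in the one step you yourself flag as the main obstacle, the bilinear estimate, where your derivative bookkeeping does not close. You propose to spend only $\abs{\xi}^{s}e^{-c(t-\tau)\abs{\xi}^{2s}}\lesssim (t-\tau)^{-1/2}$, i.e.\ $s$ derivatives of smoothing, and claim this ``absorbs the $\nabla$'' after invoking the algebra property of $\dot H^{\sigma s}$. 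But the product $R^\perp F\cdot\nabla G$ with $F,G$ controlled in $\dot H^{\sigma s}$ can only be bounded in $\dot H^{\sigma s-1}$ in terms of those norms (no product estimate places it above the regularity of $\nabla G$), so the semigroup must return a \emph{full} derivative, at cost $\abs{\xi}e^{-c(t-\tau)\abs{\xi}^{2s}}\lesssim (t-\tau)^{-1/(2s)}$, not $(t-\tau)^{-1/2}$; for $s<1$ your stated factor leaves a deficit of $1-s$ derivatives. Relatedly, the role you assign to $\sigma<2$ (``keeps $\sigma s-1<s$'') is not the operative mechanism: in your scheme the integrability requirement is $1<2s$, i.e.\ $s>\tfrac12$, which does hold here --- but only because $\sigma\in(\tfrac1s,2)$ being nonempty forces it, a point you never invoke.

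For comparison, the paper avoids product/algebra estimates altogether: after switching to $v=\abs{\nabla}^{\sigma s}e^{\phi(t)A^{1/2}}u$ it absorbs the \emph{entire} weight $\abs{k}^{\sigma s}$ into the semigroup via $e^{-\mu(t-\tau)\abs{k}^{2s}}\abs{k}^{\sigma s}\le C_\mu (t-\tau)^{-\sigma/2}$, and then bounds the remaining convolution, whose multiplier is $\abs{k-j}^{-(\sigma s-1)}\abs{j}^{-\sigma s}$, by an $\ell^2\ast\ell^1$ Young estimate; there $\sigma s>1$ provides the $\ell^2$/$\ell^1$ summability and $\sigma<2$ is precisely what makes $(t-\tau)^{-\sigma/2}$ integrable, yielding the factor $T^{1-\sigma/2}$ in \eqref{eq:Phi-selfmap}. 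So your write-up can be repaired either by recovering the full derivative at cost $(t-\tau)^{-1/(2s)}$ and explicitly using $s>\tfrac12$, or by adopting the paper's weighted convolution bound; as written, the central estimate does not go through, and the stated explanations of where $\sigma<2$ and the smoothing rate enter are incorrect.
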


\begin{remark}[On the role of $s$]
 As can be seen easily from the arguments below, for reasons of concavity it is important in our arguments for finding solutions of \eqref{eq:SQG-det} that $s\leq 1$: in fact, this is what allows us to absorb the propagators $e^{-\nu W_t\abs{\nabla}^s}$ using the exponential weights of our Gevrey spaces.
 
 Moreover, a finer analysis of the bilinear terms shows that in case $s>\frac{2}{3}$ one may actually choose $\sigma=1$ in the above proposition and in the remaining arguments of the article, so that Theorems \ref{thm:main_short} and \ref{thm:main_full} hold in that case as well. However, since we are already working in Gevrey spaces with lots of regularity, for the sake of clarity of the presentation we have chosen not to emphasize this point.
\end{remark}

\begin{remark}[On the probability of the assumption]
 Taking $T_0=0$ for simplicity, with this result one obtains a local solution for any given continuous $\phi$ with $\phi(0)=\alpha$. In particular, already for a simple choice like $\phi(t)=\alpha$ the condition that $\phi(t)-\nu W_t\geq 0$ on $[0,T]$ will be satisfied for all Brownian paths except for exponentially few: One recalls that for $\kappa>0$
 $$\mathbb{P}[\sup_{t\in[0,T]}W_t>\kappa]=\frac{2}{\sqrt{2\pi}}\int_{\frac{\kappa}{\sqrt{T}}}^\infty e^{-\frac{y^2}{2}}dy.$$
 For our specific choice of $\phi(t)=\alpha+\beta t$ as in \eqref{eq:phi-def}, the condition that $\phi(t)-\nu W_t\geq 0$ will later be seen to hold \emph{globally} on an exponentially large set (see Lemma \ref{lem:drift}).
\end{remark}

\begin{proof}[Proof of Proposition \ref{prop:local}]
Without loss of generality we assume that $T_0=0$ and write $\phi(t)=\alpha+\beta t$ as in \eqref{eq:phi-def}.\\
Inspired by Duhamel's formula for a solution of \eqref{eq:SQG-det}, let us define the map $\Phi$ as
\begin{equation*}
 \Phi(u)(t):=e^{-\frac{t}{2}\nu^2 A}u_0+\underbrace{\int_0^t e^{-\frac{t-\tau}{2}\nu^2 A}B(u,u)(\tau)d\tau}_{=:Q(u,u)}.
\end{equation*}
Since $\beta<\frac{\nu^2}{2}$ we have that
\begin{equation*}
 \norm{e^{\phi(t)A^{1/2}}e^{-\frac{t}{2}\nu^2 A}u_0}^2_{\dot{H}^{\sigma s}}=\sum_{k\in\Z^2\setminus\{0\}}\abs{k}^{2s\sigma}\abs{e^{\phi(t)\abs{k}^s-\nu^2\frac{t}{2}\abs{k}^{2s}}\hat{u_0}(k)}^2\leq \norm{e^{\alpha A^{1/2}}u_0}_{\dot{H}^{\sigma s}}^2.
\end{equation*}
For the bilinear term it is convenient to mod out by the linear flow and normalize by switching to the variable $v:=\abs{\nabla}^{\sigma s}e^{\phi(t)A^{1/2}}u$, so that
\begin{equation}\label{eq:u->v}
 \hat{u}(t,j)=e^{-\phi(t)\abs{j}^s}\abs{j}^{-\sigma s}\hat{v}(t,j),
\end{equation}
such that $\norm{\hat{v}(t,j)}_{\ell^2_j}=\norm{v(t)}_{L^2}=\norm{e^{\phi(t)A^{1/2}}u(t)}_{\dot{H}^{\sigma s}}$. 

The nonlinearity then reads
\begin{equation*}
 \widehat{B(u,u)(t,k)}=\sum_{j\in\Z^2\setminus\{0\}}\frac{(k-j)\cdot j^\perp}{\abs{k-j}^{\sigma s}\abs{j}^{1+\sigma s}}e^{-\nu W_t[\abs{k}^s-\abs{k-j}^s-\abs{j}^s]}e^{-\phi(t)[\abs{k-j}^s+\abs{j}^s]}\hat{v}(k-j)\hat{v}(j),
\end{equation*}
and hence we obtain
\begin{equation*}
\begin{aligned} 
 &\norm{e^{\phi(t)A^{1/2}}Q(u,u)}^2_{\dot{H}^{\sigma s}}\leq\int_0^t\sum_{k\in\Z^2\setminus\{0\}}\abs{e^{\phi(t)\abs{k}^s-\nu^2\frac{t-\tau}{2}\abs{k}^{2s}}\abs{k}^{\sigma s}\widehat{B(u,u)(\tau,k)}}^2\,d\tau\\
 &\leq\int_0^t\hspace{-1mm}\sum_{k\in\Z^2\setminus\{0\}}\abs{e^{\beta (t-\tau)\abs{k}^s-\nu^2\frac{t-\tau}{2}\abs{k}^{2s}}\abs{k}^{\sigma s} \hspace{-3mm}\sum_{j\in\Z^2\setminus\{0\}}m(k,j)e^{(\phi(\tau)-\nu W_\tau)[\abs{k}^s-\abs{k-j}^s-\abs{j}^s]}\hat{v}(k-j)\hat{v}(j)}^2 d\tau
\end{aligned}
\end{equation*}
after using that $\phi(t)-\phi(\tau)=\beta (t-\tau)$, and having introduced the notation
$$m(k,j):=\frac{(k-j)\cdot j^\perp}{\abs{k-j}^{\sigma s}\abs{j}^{1+\sigma s}}.$$
We observe now that $\abs{k}^s\leq\abs{k-j}^s+\abs{j}^s$ when $0<s\leq 1$, and invoke our assumption that $\phi(t)-\nu W_t\geq 0$ on $[0,T]$ to drop the second exponential. Furthermore we note that since $\beta<\frac{\nu^2}{2}$ there exists $\mu>0$ such that $\beta (t-\tau)\abs{k}^s-\nu^2\frac{t-\tau}{2}\abs{k}^{2s}\leq -\mu(t-\tau)\abs{k}^{2s}$ for all $k\in\Z^2\setminus\{0\}$. We combine this with $\abs{k}^{\sigma s}=\left((t-\tau)\abs{k}^{2s}\right)^{\frac{\sigma}{2}}(t-\tau)^{-\frac{\sigma}{2}}$ to see that there exists $C_\mu>0$ such that
\begin{equation*}
 e^{-\mu(t-\tau)\abs{k}^{2s}}\abs{k}^{\sigma s}\leq \frac{C_\mu}{(t-\tau)^{\frac{\sigma}{2}}}
\end{equation*}
for all $k\in\Z^2\setminus\{0\}$. A standard convolution estimate then yields
\begin{equation}\label{eq:conv_est}
\begin{aligned}
 \norm{e^{\phi(t)A^{1/2}}Q(u,u)}_{\dot{H}^{\sigma s}}&\leq\int_0^t\left(\sum_{k\in\Z^2\setminus\{0\}}\abs{e^{-\mu(t-\tau)\abs{k}^{2s}}\abs{k}^{\sigma s}\sum_{j\in\Z^2\setminus\{0\}}m(k,j)\hat{v}(k-j)\hat{v}(j)}^2\right)^{\frac{1}{2}}d\tau\\
 &\leq\int_0^t\frac{C_\mu}{(t-\tau)^{\frac{\sigma}{2}}}d\tau\cdot \sup_{0\leq\tau\leq t}\norm{\frac{1}{\abs{i}^{\sigma s-1}}\hat{v}(\tau,i)}_{\ell^2_i}\norm{\frac{1}{\abs{j}^{\sigma s}}\hat{v}(\tau,j)}_{\ell^1_j}.
\end{aligned}
\end{equation}
By construction $\sigma s>1$, so we have $\norm{\frac{1}{\abs{i}^{\sigma s-1}}\hat{v}(\tau,i)}_{\ell^2_i}\leq\norm{v(\tau)}_{L^2}$ and also
\begin{equation*}
 \norm{\frac{1}{\abs{j}^{\sigma s}}\hat{v}(\tau,j)}_{\ell^1_j}\leq\norm{\frac{1}{\abs{j}^{\sigma s}}}_{\ell^2_j}\norm{\hat{v}(\tau,j)}_{\ell^2_j}\lesssim\norm{v(\tau)}_{L^2}.
\end{equation*}
Recalling the definition of $v$ in \eqref{eq:u->v} we have thus shown that
\begin{equation*}
 \norm{e^{\phi(t)A^{1/2}}Q(u,u)}_{\dot{H}^{\sigma s}}\leq C_\mu t^{1-\frac{\sigma}{2}}\sup_{0\leq\tau\leq t}\norm{e^{\phi(\tau)A^{1/2}}u(\tau)}_{\dot{H}^{\sigma s}}^2.
\end{equation*}

Recalling Definition \ref{def:gevrey} of the norm 
\begin{equation}
 \norm{u(t)}_{\G^\sigma_\phi}=\norm{e^{\phi(t)A^{1/2}}u(t)}_{\dot{H}^{\sigma s}},\text{ with } \norm{u(t)}_{C^0_T\G^\sigma_\phi}=\sup_{t\in [0,T]}\norm{e^{\phi(t)A^{1/2}}u}_{\dot{H}^{\sigma s}},
\end{equation}
we can write our result so far as the mapping property
\begin{equation}\label{eq:Phi-selfmap}
 \norm{\Phi(u)}_{C^0_T\G^\sigma_\phi}\leq \norm{e^{\alpha A^{1/2}}u_0}_{\dot{H}^{\sigma s}}+C_\mu T^{1-\frac{\sigma}{2}}\norm{u}_{C^0_T\G^\sigma_\phi}^2.
\end{equation}

One proceeds in direct analogy to show that $\Phi$ is also a contraction on a suitably small ball around the origin in the space 
$$C^0_T\G^\sigma_\phi=\{f\in C^0([0,T],\G^\sigma_\phi):\,\norm{f}_{C^0_T\G^\sigma_\phi}<\infty\},$$ 
i.e.\ that
\begin{equation}\label{eq:Phi-contract}
 \norm{\Phi(u)-\Phi(w)}_{C^0_T\G^\sigma_\phi}\leq C_\mu T^{1-\frac{\sigma}{2}}\left(\norm{u}_{C^0_T\G^\sigma_\phi}+\norm{w}_{C^0_T\G^\sigma_\phi}\right)\norm{u-w}_{C^0_T\G^\sigma_\phi}.
\end{equation}

As usual, together \eqref{eq:Phi-selfmap} and \eqref{eq:Phi-contract} demonstrate the existence of a solution in $C^0_T\G^\sigma_\phi$ for $T>0$ small enough. They also show that $T$  depends only on the size of the initial data $\norm{e^{\alpha A^{1/2}}u_0}_{\dot{H}^{\sigma s}}$ as claimed.
\end{proof}

\section{Energy Estimates}\label{sec:en_est}
In this section we obtain a priori estimates for the local solutions to \eqref{eq:SQG-det} constructed in the previous section.
\begin{proposition}\label{prop:en_est}
 Assume  $\sigma\in (\frac{1}{s},2)$ and let $u$ be a local solution on $[T_0,T]$ of \eqref{eq:SQG-det} or \eqref{eq:SQG-det2} that is bounded in $C^0([T_0,T],\G^{\sigma+1}_\phi)$. Assuming that $\phi(t)-\nu W_t\geq 0$ with $\phi(t)$ as in \eqref{eq:phi-def},and that
 \begin{equation}
 \norm{e^{\phi(T_0) A^{1/2}} u(T_0)}_{\dot H^{\sigma s}}\leq \frac{\nu^2}{2}-\beta,
 \end{equation}
 for $t\in[T_0,T]$ we have 
 \begin{equation}
  \norm{e^{\phi(t)A^{1/2}} u(t)}_{\dot H^{\sigma s}}\leq \norm{e^{\phi(T_0) A^{1/2}} u(T_0)}_{\dot H^{\sigma s}}\,.
 \end{equation}
 \end{proposition}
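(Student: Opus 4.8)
The plan is to adapt the Foias--Temam energy method to the variable $w(t):=e^{\phi(t)A^{1/2}}u(t)$, for which $\norm{u(t)}_{\G^\sigma_\phi}=\norm{w(t)}_{\dot H^{\sigma s}}$, and to derive a differential inequality for $y(t):=\norm{w(t)}_{\dot H^{\sigma s}}^2$. The assumed regularity $u\in C^0([T_0,T],\G^{\sigma+1}_\phi)$ is exactly what makes this rigorous: it puts $w$ in $C^0([T_0,T],\dot H^{(\sigma+1)s})$, and \eqref{eq:SQG-det2} together with $\partial_t\phi=\beta$ gives $\partial_t w=\beta A^{1/2}w-\frac12\nu^2 Aw-e^{\phi A^{1/2}}B(u,u)$, so that all pairings below are between factors whose regularities add up correctly and $y$ is $C^1$ with
\begin{equation*}
 \frac12\frac{d}{dt}y=\beta\norm{w}_{\dot H^{(\sigma+\frac12)s}}^2-\frac12\nu^2\norm{w}_{\dot H^{(\sigma+1)s}}^2-\ip{A^\sigma w,\,e^{\phi A^{1/2}}B(u,u)}_{L^2},
\end{equation*}
the three terms being the gain from differentiating the weight, the diffusion, and the nonlinearity.

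For the linear terms I would use that every nonzero frequency on $\T^2$ has $\abs{k}\geq 1$, whence $\norm{w}_{\dot H^{(\sigma+\frac12)s}}\leq\norm{w}_{\dot H^{(\sigma+1)s}}$; with $\beta<\frac12\nu^2$ this bounds their sum by $-(\frac12\nu^2-\beta)\norm{w}_{\dot H^{(\sigma+1)s}}^2$. Everything then rests on the trilinear term $\mathcal T:=\ip{A^\sigma w,\,e^{\phi A^{1/2}}B(u,u)}_{L^2}$. I would treat it as in the proof of Proposition \ref{prop:local}: writing it on the Fourier side and substituting $\hat u(k)=e^{-\phi\abs{k}^s}\abs{k}^{-\sigma s}\hat v(k)$ as in \eqref{eq:u->v}, the kernel of $\mathcal T$ acquires the factor $e^{(\phi(t)-\nu W_t)[\abs{k}^s-\abs{k-j}^s-\abs{j}^s]}$, which is $\leq 1$ by subadditivity of $t\mapsto t^s$ for $0<s\leq1$ together with the hypothesis $\phi(t)-\nu W_t\geq0$. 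This is precisely where the propagators $\Gamma^{\pm1}=e^{\mp\nu W_t\abs{\nabla}^s}$ get absorbed into the Gevrey weights, and it reduces matters to the $\Gamma$-free, weight-free model form $\ip{A^\sigma w,\,R^\perp w\cdot\nabla w}_{L^2}$. For this one exploits that $R^\perp w$ is divergence free, so that the leading-order contribution (all derivatives on the undifferentiated factor) cancels after symmetrization, leaving a commutator-type expression; estimating it with the ingredients of Section \ref{sec:lwp} --- $\sigma s>1$, so that $\dot H^{\sigma s}(\T^2)\hookrightarrow L^\infty$ and $\norm{\abs{j}^{-\sigma s}}_{\ell^2_j}<\infty$, together with interpolation between $\dot H^{\sigma s}$ and $\dot H^{(\sigma+1)s}$ --- should give
\begin{equation*}
 \abs{\mathcal T}\leq\norm{w}_{\dot H^{\sigma s}}\,\norm{w}_{\dot H^{(\sigma+1)s}}^2 .
\end{equation*}
I expect this estimate to be the main obstacle, in particular getting its constant sharp enough to match the threshold $\frac12\nu^2-\beta$ rather than merely a multiple of it: a bound obtained by naively inserting absolute values destroys the divergence-free cancellation, and with it the good constant.

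Granting the trilinear bound, the three contributions combine to give, on $[T_0,T]$,
\begin{equation*}
 \frac{d}{dt}y(t)\leq-\big(\nu^2-2\beta-2\norm{w(t)}_{\dot H^{\sigma s}}\big)\norm{w(t)}_{\dot H^{(\sigma+1)s}}^2 .
\end{equation*}
Since $\norm{w(T_0)}_{\dot H^{\sigma s}}=y(T_0)^{1/2}\leq\frac12\nu^2-\beta$, the right-hand side is $\leq0$ whenever $y(t)^{1/2}\leq\frac12\nu^2-\beta$, so a standard continuity (barrier) argument shows that $y$ cannot cross this threshold, hence is non-increasing, so $y(t)\leq y(T_0)$ for all $t\in[T_0,T]$, which is the asserted inequality. (Equivalently, using $\norm{w}_{\dot H^{(\sigma+1)s}}^2\geq y$ on $\T^2$ one compares $y$ with the solution of $\dot z=-(\nu^2-2\beta-2\sqrt z)\,z$, which also dispatches the borderline case $y(T_0)=(\frac12\nu^2-\beta)^2$.)
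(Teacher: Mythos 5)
Your overall scheme coincides with the paper's: your change of variable $w=e^{\phi A^{1/2}}u$ paired against $A^\sigma w$ is the same computation as the paper's testing of \eqref{eq:SQG-det2} with $e^{\phi(t)A^{1/2}}A^\sigma u$; the gain $\beta\norm{w}_{\dot H^{(\sigma+\frac12)s}}^2$ from $\phi'=\beta$, bounded by $\beta\norm{w}_{\dot H^{(\sigma+1)s}}^2$ since $\abs{k}\geq 1$ on $\T^2$, the absorption of the propagators $e^{\pm\nu W_t\abs{\nabla}^s}$ via $\phi(t)-\nu W_t\geq 0$ together with $\abs{k+j}^s\leq\abs{k}^s+\abs{j}^s$ for $0<s\leq 1$, the resulting differential inequality (the paper's \eqref{eq:deriv_bound}), and the concluding continuity/barrier argument all match Proposition \ref{prop:en_est} and Lemma \ref{lem:bilin}; if anything, your handling of the threshold crossing is spelled out more carefully than in the paper.

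The one place you deviate is the trilinear bound, and there your proposed mechanism is in tension with the step that precedes it. To discard the factor $e^{(\phi(t)-\nu W_t)[\abs{k+j}^s-\abs{k}^s-\abs{j}^s]}\leq 1$ you must insert absolute values inside the $(j,k)$-sum, since that factor depends on $(j,k)$; once you have done so, the divergence-free/symmetrization cancellation you want to exploit is no longer available --- exactly the difficulty you yourself point out. The paper does not attempt to retain the cancellation: Lemma \ref{lem:bilin} takes absolute values, reduces to $\norm{R^\perp f\cdot\nabla f}_{\dot H^{(\sigma-1)s}}\norm{f}_{\dot H^{(\sigma+1)s}}$ with $f=e^{\phi(t)A^{1/2}}w$, and estimates this by H\"older, Sobolev embedding and interpolation, so it only yields the bound up to an implicit constant ($\lesssim$), not with constant $1$ (the proof of Proposition \ref{prop:en_est} then quotes it with constant $1$, which is a looseness of the paper; read carefully, the smallness hypothesis should be $\norm{e^{\phi(T_0)A^{1/2}}u(T_0)}_{\dot H^{\sigma s}}\leq(\tfrac{\nu^2}{2}-\beta)/C$ with $C$ the constant of the lemma). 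So your concern about the sharp constant is legitimate, but the cancellation route to constant $1$ is not carried out in your sketch and would require a genuinely different, commutator-type treatment of the Gevrey weights and the $\Gamma$ factors; if instead you accept the implicit constant and adjust the threshold accordingly --- which is what the paper's argument effectively does --- your proof is complete and is essentially the paper's.
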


\begin{proof}
 As in standard energy estimates we test the equation with $e^{\phi(t)A^{1/2}} A^\sigma u$ to get
 \begin{equation*}
  \ip{e^{\phi(t)A^{1/2}} \partial_t u,e^{\phi(t)A^{1/2}} A^\sigma u}+\ip{e^{\phi(t)A^{1/2}} B(u,u),e^{\phi(t)A^{1/2}} A^\sigma u}=-\frac{1}{2}\nu^2\norm{e^{\phi(t)A^{1/2}} u}^2_{\dot H^{(\sigma+1)s}}.
 \end{equation*}
 Now
 \begin{equation*}
 \begin{aligned}
  \ip{e^{\phi(t)A^{1/2}} \partial_t u,e^{\phi(t)A^{1/2}} A^\sigma u}&=\frac{1}{2}\frac{d}{dt}\norm{e^{\phi(t)A^{1/2}} u}^2_{\dot H^{\sigma s}}-\beta\ip{e^{\phi(t)A^{1/2}} A^{1/2} u,e^{\phi(t)A^{1/2}} A^\sigma u}\\
  &\geq \frac12\frac{d}{dt}\norm{e^{\phi(t)A^{1/2}} u}^2_{\dot H^{\sigma s}}-\beta\norm{e^{\phi(t)A^{1/2}} u}^2_{\dot H^{(\sigma+1)s}}
 \end{aligned}
 \end{equation*}
since by construction $\phi'(t)=\beta$.
 
 Combining this with the bilinear estimate
 \begin{equation*}
 \ip{e^{\phi(t)A^{1/2}} B(u,u),e^{\phi(t)A^{1/2}} A^\sigma u}\leq \norm{e^{\phi(t)A^{1/2}} u}_{\dot H^{\sigma s}}\norm{e^{\phi(t)A^{1/2}} u}^{2}_{\dot H^{(\sigma+1)s}}
 \end{equation*}
 from Lemma \ref{lem:bilin} below yields
 \begin{equation}\label{eq:deriv_bound}
  \frac{d}{dt}\norm{e^{\phi(t)A^{1/2}} u}^2_{\dot H^{\sigma s}}+\left(\nu^2-2\beta-2 \norm{e^{\phi(t)A^{1/2}} u}_{\dot H^{\sigma s}}\right)\norm{e^{\phi(t)A^{1/2}} u}^2_{\dot H^{(\sigma+1)s}}\leq0\,.
 \end{equation}
 Thus so long as
 \[\norm{e^{\alpha A^{1/2}} u(0)}_{\dot H^{\sigma s}}\leq \frac{\nu^2}{2}-\beta\]
 then
 \[\norm{e^{\phi(t)A^{1/2}} u}_{\dot H^{\sigma s}}\leq \norm{e^{\alpha A^{1/2}} u(0)}_{\dot H^{\sigma s}}\,.\]
\end{proof}

\begin{lemma}\label{lem:bilin}
 Assuming that $\phi(t)-\nu W_t\geq 0$ for all $t\in [T_0,T]$ we have the following bilinear bounds on $[T_0,T]$:
 \begin{equation}
   \abs{\ip{e^{\phi(t)A^{1/2}}B(w,w),e^{\phi(t)A^{1/2}}Aw}_{L^2}}\lesssim \norm{e^{\phi(t)A^{1/2}} w}_{\dot H^{\sigma s}}\norm{e^{\phi(t)A^{1/2}} w}^2_{\dot H^{(\sigma+1)s}}.
  \end{equation}
 \end{lemma}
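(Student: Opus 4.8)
The plan is to pass to Fourier series, use the hypothesis $\phi(t)-\nu W_t\ge0$ to absorb every exponential weight, and thereby reduce the claim to a scalar trilinear frequency bound that closes by a convolution estimate, in direct analogy with the proof of Proposition \ref{prop:local}. First I would expand the inner product on the Fourier side. Writing $\Gamma=e^{-\nu W_t\abs{\nabla}^s}$, the multiplier of $B(w,w)$ produces the stochastic factor $e^{-\nu W_t[\abs{k}^s-\abs{k-j}^s-\abs{j}^s]}$ together with the degree-one symbol $m_0(k,j):=\frac{(k-j)\cdot j^\perp}{\abs{j}}$ (one Riesz transform and one gradient), while the second slot $e^{\phi A^{1/2}}Aw$ contributes the weight $\abs{k}^{2s}$. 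Setting $g:=e^{\phi(t)A^{1/2}}w$, so that $\hat g(k)=e^{\phi(t)\abs{k}^s}\hat w(k)$ and $\norm{g}_{\dot H^{\gamma s}}=\norm{e^{\phi(t)A^{1/2}}w}_{\dot H^{\gamma s}}$, and collecting the Gevrey weights, the inner product becomes
\[
 \sum_{k,j}\abs{k}^{2s}\,m_0(k,j)\,e^{(\phi(t)-\nu W_t)[\abs{k}^s-\abs{k-j}^s-\abs{j}^s]}\,\hat g(k-j)\,\hat g(j)\,\overline{\hat g(k)}.
\]
Because $0<s\le1$, subadditivity of $r\mapsto r^s$ gives $\abs{k}^s\le\abs{k-j}^s+\abs{j}^s$, so the bracket is nonpositive; together with $\phi(t)-\nu W_t\ge0$ this forces the exponential factor to be $\le1$, and it may be dropped. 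This is the only use of the hypothesis, and it reduces everything to
\[
 \sum_{k,j}\abs{k}^{2s}\,\abs{m_0(k,j)}\,\abs{\hat g(k-j)}\,\abs{\hat g(j)}\,\abs{\hat g(k)}\lesssim\norm{g}_{\dot H^{\sigma s}}\norm{g}_{\dot H^{(\sigma+1)s}}^2.
\]

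Next I would bound this trilinear sum by distributing derivatives onto the two highest frequencies. Using $(k-j)\cdot j^\perp=k\cdot j^\perp$ one has $\abs{m_0(k,j)}\le\min(\abs{k},\abs{k-j})$, so the symbol is controlled by $\abs{k}^{2s}\min(\abs{k},\abs{k-j})$. I would split the sum into the region $\abs{j}\le\abs{k-j}$ (where $\abs{k}\lesssim\abs{k-j}$) and its complement $\abs{k-j}<\abs{j}$ (where $\abs{k}\lesssim\abs{j}$). In each region I assign the full weight $\abs{\cdot}^{(\sigma+1)s}$ to the output frequency $k$ and to the larger input, leaving the smaller input unweighted; the required pointwise symbol inequality --- e.g.\ $\abs{k}^{2s}\min(\abs{k},\abs{k-j})\le\abs{k}^{2s+1}\le C\abs{k}^{(\sigma+1)s}\abs{k-j}^{(\sigma+1)s}$ in the first region --- is elementary and uses only that all frequencies are $\ge1$ together with $\sigma>1$ and $\sigma s>1$, both guaranteed by $\sigma\in(\frac1s,2)$. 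Young's inequality $\norm{F\ast H}_{\ell^2}\le\norm{F}_{\ell^2}\norm{H}_{\ell^1}$, with the two weighted factors in $\ell^2$ (each giving $\norm{g}_{\dot H^{(\sigma+1)s}}$) and the unweighted factor in $\ell^1$, then bounds the sum by $\norm{g}_{\dot H^{(\sigma+1)s}}^2\sum_m\abs{\hat g(m)}$. Finally $\sum_m\abs{\hat g(m)}\le\norm{\abs{m}^{-\sigma s}}_{\ell^2}\norm{g}_{\dot H^{\sigma s}}$ by Cauchy--Schwarz, and $\norm{\abs{m}^{-\sigma s}}_{\ell^2(\Z^2\setminus\{0\})}<\infty$ precisely because $\sigma s>1$ --- exactly as in Proposition \ref{prop:local}. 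Recalling $g=e^{\phi A^{1/2}}w$ returns the asserted estimate.

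The main obstacle is the bookkeeping in the second step: one must check the pointwise symbol inequality in each frequency interaction regime (high--high and high--low) so that all of the available derivatives land on the two $\ell^2$ factors while the remaining factor stays $\ell^1$-summable. This is where $\sigma>1$ and $\sigma s>1$ enter; notably, the cancellation structure of the SQG nonlinearity is not needed here, since the dissipative regularity $\dot H^{(\sigma+1)s}$ appearing on the right already supplies the extra derivative.
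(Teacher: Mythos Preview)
Your argument is correct. The first half --- expanding on the Fourier side, substituting $g=e^{\phi A^{1/2}}w$, and using $\phi-\nu W_t\ge0$ together with the subadditivity $\abs{k}^s\le\abs{k-j}^s+\abs{j}^s$ to discard the exponential factor --- matches the paper's proof line for line.

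The second half takes a different route. After dropping the exponential, the paper splits the output weight as $\abs{k+j}^{2\sigma s}=\abs{k+j}^{(\sigma-1)s}\abs{k+j}^{(\sigma+1)s}$, applies Cauchy--Schwarz in the output frequency to peel off one factor of $\norm{g}_{\dot H^{(\sigma+1)s}}$, and recognizes what remains as (a majorant of) $\norm{R^\perp g\cdot\nabla g}_{\dot H^{(\sigma-1)s}}$; it then closes with a product estimate obtained via H\"older, Sobolev embedding and interpolation. You instead keep everything on the frequency side and bound the trilinear sum directly: a high--low split, the pointwise symbol bound $\abs{m_0}\le\min(\abs{k},\abs{k-j})$, Young's inequality $\ell^2\ast\ell^1\hookrightarrow\ell^2$, and finally Cauchy--Schwarz to convert $\norm{\hat g}_{\ell^1}$ into $\norm{g}_{\dot H^{\sigma s}}$. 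Your route is more elementary and makes the roles of the hypotheses $\sigma>1$ and $\sigma s>1$ completely explicit; the paper's route is shorter but defers the same work to the black-box product estimate.

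One remark on the statement itself: the paper's proof actually carries the weight $\abs{k+j}^{2\sigma s}$, i.e.\ it establishes the estimate for the pairing $\ip{e^{\phi A^{1/2}}B(w,w),e^{\phi A^{1/2}}A^\sigma w}$, which is precisely what Proposition~\ref{prop:en_est} uses. The $A$ in the lemma statement appears to be a typo for $A^\sigma$. You have proved the statement as written.
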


\begin{proof} 
 Taking Fourier transforms we have
 \begin{equation}
 \begin{aligned}
  &\abs{\ip{e^{\phi(t)A^{1/2}}B(u,v),e^{\phi(t)A^{1/2}}A^\sigma w}_{L^2}}\\
  &=\abs{\sum_{j,k\in\Z^2}e^{\phi(t)\abs{k+j}^s}e^{-\nu W_t[\abs{k+j}^s-\abs{j}^s-\abs{k}^s]}\frac{j^\perp\cdot k}{\abs{j}}\hat{u}(j)\hat{v}(k)e^{\phi(t)\abs{k+j}^s}\abs{k+j}^{2\sigma s}\overline{\hat{w}(k+j)}}\\
  &=\abs{\sum_{j,k\in\Z^2}e^{[\phi(t)-\nu W_t][\abs{k+j}^s-\abs{j}^s-\abs{k}^s]}\frac{j^\perp\cdot k}{\abs{j}}(e^{\phi(t)\abs{j}^s}\hat{u}(j))(e^{\phi(t)\abs{k}^s}\hat{v}(k))e^{\phi(t)\abs{k+j}^s}\abs{k+j}^{2\sigma s}\overline{\hat{w}(k+j)}}\\
  &\leq \sum_{j,k\in\Z^2}\abs{\frac{j^\perp\cdot k}{\abs{j}}(e^{\phi(t)\abs{j}^s}\hat{u}(j))(e^{\phi(t)\abs{k}^s}\hat{v}(k))e^{\phi(t)\abs{k+j}^s}\abs{k+j}^{2\sigma s}\overline{\hat{w}(k+j)}}\\
  &\leq \norm{R^\perp e^{\phi(t)A^{1/2}} u\cdot \nabla e^{\phi(t)A^{1/2}} v}_{\dot{H}^{(\sigma-1)s}}\norm{e^{\phi(t)A^{1/2}}w}_{\dot{H}^{(\sigma+1)s}},
 \end{aligned}
 \end{equation}
 where we used that by assumption $\phi(t)-\nu W_t\geq 0$ and the fact that for $0\leq s\leq 1$, $j,k\in\Z^2$ the inequality $\abs{k+j}^s\leq\abs{j}^s+\abs{k}^s$ holds.
 
 Now it suffices to use H\"older's inequality, the Sobolev embedding and interpolation to obtain the claim: Note that one has the estimate $$\norm{R^\perp f\cdot\nabla f}_{\dot{H}^{(\sigma-1)s}}\lesssim \norm{f}_{\dot H^{\sigma s}}\norm{f}^{2}_{\dot H^{(\sigma+1)s}},$$ which gives the claim upon setting $f=e^{\phi(t)A^{1/2}} w$.
\end{proof}

\section{Global Solutions}\label{sec:global}
The progression from local solutions to global ones is now a matter of suitably combining the energy estimates with the local existence argument. Hereby it is important to know about the validity of the bound $\phi(t)-\nu W_t\geq 0$. To this end we remind the reader of the following standard result for Brownian motion with a drift:
\begin{lemma}\label{lem:drift}
For $\alpha ,\beta>0$, the probability that the process $W_t-\beta t > \alpha$ in finite time is given by $e^{-2\alpha \beta}$.
\end{lemma}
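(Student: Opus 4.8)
The plan is to compute the relevant probability directly from the reflection principle / known hitting-time distribution for Brownian motion with drift, which is the cleanest route. Concretely, let $M_\infty := \sup_{t\ge 0}(W_t - \beta t)$; the event ``$W_t - \beta t > \alpha$ in finite time'' is precisely $\{M_\infty > \alpha\}$, so the claim is that $\mathbb{P}[M_\infty > \alpha] = e^{-2\alpha\beta}$. First I would recall that for $\beta>0$ the drifted Brownian motion $W_t - \beta t \to -\infty$ almost surely as $t\to\infty$, so $M_\infty$ is a well-defined finite random variable and the running maximum is actually attained; this makes the event well-posed and justifies passing from ``exceeds $\alpha$ at some finite time'' to ``the all-time supremum exceeds $\alpha$''.

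Next I would identify the law of $M_\infty$. The standard fact is that $M_\infty$ is exponentially distributed with parameter $2\beta$, i.e.\ $\mathbb{P}[M_\infty > \alpha] = e^{-2\alpha\beta}$ for $\alpha \ge 0$. I would derive this via the exponential martingale: for $\lambda>0$ the process $\mathcal{E}_t := \exp(\lambda(W_t - \beta t) - \tfrac12\lambda^2 t + \lambda\beta t) = \exp(\lambda(W_t-\beta t))\exp((\lambda\beta - \tfrac12\lambda^2)t)$ is a martingale; choosing $\lambda = 2\beta$ kills the drift term in the exponent and leaves exactly $\mathcal{E}_t = \exp(2\beta(W_t - \beta t))$, a nonnegative martingale with $\mathcal{E}_0 = 1$. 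Applying optional stopping at the hitting time $\tau_\alpha := \inf\{t\ge 0:\, W_t - \beta t = \alpha\}$ (with the usual truncation $\tau_\alpha \wedge n$ and a bounded-convergence / uniform integrability argument, using that $\mathcal{E}_{t\wedge\tau_\alpha} \le e^{2\beta\alpha}$ is bounded) gives $\mathbb{P}[\tau_\alpha < \infty]\, e^{2\beta\alpha} = \mathbb{E}[\mathcal{E}_{\tau_\alpha}\mathbf{1}_{\tau_\alpha<\infty}] = 1$, using that on $\{\tau_\alpha = \infty\}$ we have $W_t - \beta t \to -\infty$ so $\mathcal{E}_{t\wedge\tau_\alpha}\to 0$. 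Hence $\mathbb{P}[\tau_\alpha < \infty] = e^{-2\alpha\beta}$, which is the assertion.

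An alternative I would mention is to obtain this from the explicit density of the first passage time of Brownian motion with drift to a level (an inverse Gaussian type law) and integrate out, or from the joint law of $(W_t, \sup_{s\le t} W_s)$ via the reflection principle followed by a Girsanov change of measure removing the drift; both reproduce $\mathbb{P}[\sup_{s\le t}(W_s - \beta s) > \alpha] \to e^{-2\alpha\beta}$ as $t\to\infty$. I would use the martingale argument as the main line since it is shortest and most self-contained.

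The only genuinely delicate point — and the one I would be careful to spell out — is the justification of optional stopping at the \emph{unbounded} stopping time $\tau_\alpha$: one must argue uniform integrability of $(\mathcal{E}_{t\wedge\tau_\alpha})_{t\ge 0}$, which follows because this stopped martingale is bounded above by $e^{2\beta\alpha}$ (the process $W-\beta t$ has not yet exceeded $\alpha$ before $\tau_\alpha$) and bounded below by $0$, so it is a bounded martingale and converges a.s.\ and in $L^1$; combined with $\mathcal{E}_{t\wedge\tau_\alpha}\to \mathbf{1}_{\tau_\alpha<\infty}e^{2\beta\alpha}$ a.s., taking expectations yields the result. Everything else is a routine invocation of standard stochastic calculus, so this is really the crux. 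I would also note in passing that strictly the statement as phrased should read ``$\ge \alpha$'' (or one should take the probability that $W_t - \beta t$ ever reaches the level $\alpha$), but since $\tau_\alpha$ and $\inf\{t: W_t-\beta t>\alpha\}$ coincide by path continuity this is immaterial.
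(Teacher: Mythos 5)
Your argument is correct: the exponential (Wald) martingale $\exp(2\beta(W_t-\beta t))$, stopped at the hitting time of level $\alpha$, is bounded by $e^{2\beta\alpha}$, so optional stopping is legitimate, and combined with $W_t-\beta t\to-\infty$ a.s.\ on $\{\tau_\alpha=\infty\}$ it gives $\mathbb{P}[\tau_\alpha<\infty]=e^{-2\alpha\beta}$; your closing remark about strict versus non-strict exceedance is also fine, since $\sup_{t\ge0}(W_t-\beta t)$ has a continuous (exponential) law. The difference with the paper is that the paper does not prove the lemma at all: it simply cites a standard reference (Proposition 6.8.1 of Resnick's text) for this classical fact about Brownian motion with negative drift. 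So your proposal supplies the self-contained derivation that the citation points to — essentially the textbook argument — and the only genuinely delicate step, the uniform integrability justifying optional stopping at the unbounded stopping time, is exactly the one you single out and handle correctly. Nothing is missing; your write-up just trades the paper's one-line citation for the short standard proof.
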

\begin{proof}
 See \cite[Proposition 6.8.1]{MR1181423}, for example.
\end{proof}

This allows us to deduce our main result:
\begin{theorem}\label{thm:main_full}
 Let $\sigma\in(\frac{1}{s},2)$, and assume that we are given initial data $u_0$ satisfying
 \begin{equation}\label{eq:init_cond1}
  \norm{u_0}_{\G^{\sigma}_{\alpha+\epsilon}}=\norm{e^{(\alpha+\epsilon) A^{1/2}}u_0}_{\dot{H}^{\sigma s}}\leq \frac{\nu^2}{2}-\beta
 \end{equation}
 for some $\alpha>0$, $\epsilon>0$ and $0<\beta<\frac{\nu^2}{2}$. Define $\phi(t):=\alpha+\beta t$.
 
 Then with probability at least $1-e^{-2\frac{\alpha \beta}{\nu^2}}$ there exists a pathwise\footnote{i.e.\ for each fixed path $t\mapsto W_t(\omega)$ we have a unique solution.} unique global solution $u\in C^0([0,\infty),\G^\sigma_\phi)$ to \eqref{eq:SQG-det},
 \begin{equation*}
  \begin{cases}&\partial_t u+\Gamma\left(R^\perp\Gamma^{-1}u\cdot\nabla\Gamma^{-1}u\right)=-\frac{1}{2}\nu^2\abs{\nabla}^{2s}u,\\ 
  &u(0)=u_0.
  \end{cases} 
 \end{equation*}
 
 Moreover, the mapping
 \begin{equation}
  t\mapsto \norm{u(t)}_{\G^\sigma_\phi}=\norm{e^{\phi(t)A^{1/2}}u(t)}_{\dot{H}^{\sigma s}}
 \end{equation}
 is pathwise monotonically decreasing.
\end{theorem}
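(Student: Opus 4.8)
The plan is to combine the three ingredients now in hand---local well-posedness (Proposition \ref{prop:local}), the a priori energy estimate (Proposition \ref{prop:en_est}), and the Brownian drift estimate (Lemma \ref{lem:drift})---by a bootstrap/continuation argument run on the good event. First I would fix the path $\omega$ and define the ``good event''
\[
 \mathcal{O}:=\left\{\omega:\ W_t(\omega)-\tfrac{\beta}{\nu}t\le \tfrac{\alpha}{\nu}\ \text{for all }t\ge 0\right\}.
\]
By Lemma \ref{lem:drift} applied with drift $\beta/\nu$ and level $\alpha/\nu$ (using that $\nu W_t-\beta t$ exceeds $\alpha$ iff $W_t-(\beta/\nu)t$ exceeds $\alpha/\nu$), the complement has probability $e^{-2(\alpha/\nu)(\beta/\nu)}=e^{-2\alpha\beta/\nu^2}$, so $\mathbb{P}[\mathcal{O}]\ge 1-e^{-2\alpha\beta/\nu^2}$. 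On $\mathcal{O}$ we have exactly the hypothesis $\phi(t)-\nu W_t=\alpha+\beta t-\nu W_t\ge 0$ for all $t\ge 0$ that both Proposition \ref{prop:local} and Proposition \ref{prop:en_est} require, with $I=(-\delta,\infty)$.

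Next, working on $\mathcal{O}$, I would set up the continuation argument. Let
\[
 T^\ast:=\sup\left\{T>0:\ \text{there is a (unique) solution }u\in C^0([0,T],\G^\sigma_\phi)\text{ of }\eqref{eq:SQG-det}\text{ with }\sup_{[0,T]}\norm{u(t)}_{\G^\sigma_\phi}\le \tfrac{\nu^2}{2}-\beta\right\}.
\]
That this set is nonempty uses Proposition \ref{prop:local} with $T_0=0$: since $\epsilon>0$, the data satisfy $e^{\alpha A^{1/2}}u_0\in\dot H^{\sigma s}$ (indeed $e^{(\alpha+\epsilon)A^{1/2}}u_0\in\dot H^{\sigma s}$ by \eqref{eq:init_cond1}), giving a local solution on some $[0,T]$; shrinking $T$ further, continuity of $t\mapsto\norm{u(t)}_{\G^\sigma_\phi}$ and the strict inequality in \eqref{eq:init_cond1} keep the norm $\le \nu^2/2-\beta$. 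Suppose for contradiction that $T^\ast<\infty$. One first wants a uniform higher-regularity bound near $T^\ast$ so that Proposition \ref{prop:en_est} (which is stated for solutions bounded in $C^0\G^{\sigma+1}_\phi$) applies; here I would invoke the instantaneous smoothing inherent in the parabolic Duhamel formula---the estimate in the proof of Proposition \ref{prop:local}, reapplied with $\sigma$ replaced by $\sigma+1$ and starting from any positive time $t_0>0$, upgrades the solution to $C^0([t_0,T^\ast),\G^{\sigma+1}_\phi)$, and the $\dot H^{(\sigma+1)s}$-norm is controlled by the $\dot H^{\sigma s}$-norm via the same $t^{-\sigma/2}$-type gain. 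With that in place Proposition \ref{prop:en_est} gives, for all $t\in[t_0,T^\ast)$,
\[
 \norm{e^{\phi(t)A^{1/2}}u(t)}_{\dot H^{\sigma s}}\le \norm{e^{\phi(t_0)A^{1/2}}u(t_0)}_{\dot H^{\sigma s}}\le \norm{e^{\alpha A^{1/2}}u_0}_{\dot H^{\sigma s}}\le \tfrac{\nu^2}{2}-\beta,
\]
the monotonicity being exactly \eqref{eq:deriv_bound} (the coefficient $\nu^2-2\beta-2\norm{e^{\phi A^{1/2}}u}_{\dot H^{\sigma s}}$ stays $\ge 0$ as long as the norm stays $\le\nu^2/2-\beta$, and then the derivative of the squared norm is $\le0$, so the norm cannot increase---a closed bootstrap). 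In particular $\lim_{t\uparrow T^\ast}\norm{u(t)}_{\G^\sigma_\phi}$ exists and is $\le\nu^2/2-\beta$.

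Then I would restart the local existence argument at time $T^\ast$: the norm $\norm{e^{\phi(T^\ast)A^{1/2}}u(T^\ast)}_{\dot H^{\sigma s}}$ is finite and $\le\nu^2/2-\beta$, and on $\mathcal{O}$ we still have $\phi(t)-\nu W_t\ge 0$ for $t\ge T^\ast$, so Proposition \ref{prop:local} with $T_0=T^\ast$ produces a solution on $[T^\ast,T^\ast+\tau]$ for some $\tau>0$ depending only on $\norm{u(T^\ast)}_{\G^\sigma_{\phi(T^\ast)}}\le\nu^2/2-\beta$; gluing and again using the strict energy inequality to keep the norm below the threshold contradicts the maximality of $T^\ast$. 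Hence $T^\ast=\infty$, i.e.\ there is a global solution $u\in C^0([0,\infty),\G^\sigma_\phi)$, and by the energy estimate $t\mapsto\norm{u(t)}_{\G^\sigma_\phi}$ is monotonically decreasing on every $[t_0,\infty)$ with $t_0>0$, hence (by continuity down to $t=0$, using $\norm{u(0)}_{\G^\sigma_\phi}\le\norm{u_0}_{\G^\sigma_{\alpha+\epsilon}}$) on $[0,\infty)$. Pathwise uniqueness is inherited from the uniqueness in Proposition \ref{prop:local} on each subinterval. The statement about $\epsilon$ and the slightly better probability $1-e^{-2(\alpha-\epsilon)\beta/\nu^2}$ claimed in Theorem \ref{thm:main_short} comes from the same argument run with $\alpha$ replaced by $\alpha-\epsilon$, which is harmless because the weaker assumption \eqref{eq:init_cond1} with $\alpha+\epsilon$ certainly implies the version with $\alpha$ (the weight only decreases).

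The main obstacle I anticipate is the regularity-matching between Propositions \ref{prop:local} and \ref{prop:en_est}: the local theory is stated in $\G^\sigma_\phi$ while the energy estimate demands an a priori $C^0\G^{\sigma+1}_\phi$ bound, so one must genuinely exploit parabolic smoothing from the $-\frac12\nu^2 A$ term to promote the solution to higher Gevrey regularity on positive-time intervals before the energy method can be applied, and then carefully pass to the limit $t\downarrow 0$ when asserting monotonicity on all of $[0,\infty)$. A secondary technical point is checking that the continuation threshold $\nu^2/2-\beta$ is preserved under each gluing---this is where the \emph{strict} inequality in \eqref{eq:init_cond1} and the sign of the coefficient in \eqref{eq:deriv_bound} must be used in tandem, rather than merely the conclusion of Proposition \ref{prop:en_est}.
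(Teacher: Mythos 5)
The overall architecture of your argument (restrict to the good event supplied by Lemma \ref{lem:drift}, then iterate local existence and the energy estimate, using that the local existence time depends only on the $\G^\sigma$ norm, which stays below $\frac{\nu^2}{2}-\beta$) matches the paper, but the one step you yourself flag as the main obstacle --- producing the $C^0\G^{\sigma+1}_\phi$ bound that Proposition \ref{prop:en_est} requires --- is resolved by a mechanism that fails as stated. You propose to ``reapply the estimate in the proof of Proposition \ref{prop:local} with $\sigma$ replaced by $\sigma+1$'' to get instantaneous parabolic smoothing. But that fixed-point estimate closes precisely because $\sigma<2$: absorbing the weight $\abs{k}^{\sigma s}$ into the heat factor costs $(t-\tau)^{-\sigma/2}$, integrable only for $\sigma<2$. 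Since $\sigma>\frac{1}{s}\geq 1$, one has $\sigma+1>2$, so the corresponding singularity $(t-\tau)^{-(\sigma+1)/2}$ is not integrable and the cited estimate simply does not exist at level $\sigma+1$ (indeed Proposition \ref{prop:local} is stated only for $\sigma\in(\frac{1}{s},2)$). A smoothing bootstrap gaining fewer than $(2-\sigma)s$ derivatives per Duhamel pass, with new bilinear estimates at the intermediate regularities, might be made to work, but that is not what you wrote and is not established anywhere in the paper.

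The paper closes this gap differently, and the hypothesis \eqref{eq:init_cond1} with weight $\alpha+\epsilon$ is exactly what funds it: one uses the elementary Gevrey embedding $\norm{v}_{\G^{\rho+1}_\gamma}\leq\delta^{-1}\norm{v}_{\G^{\rho}_{\gamma+\delta}}$, i.e.\ one trades an arbitrarily small loss $\delta$ of analyticity radius for a full extra power of $A^{1/2}$. Setting $\epsilon_j=2^{-j}\epsilon$, $\phi_0=\phi+\epsilon$ and $\phi_{j+1}=\phi_j-\epsilon_{j+1}\geq\phi$, the iteration runs local existence in $\G^\sigma_{\phi_j}$ on a uniform time step $T$, converts this into a finite $\G^{\sigma+1}_{\phi_{j+1}}$ bound via the embedding (at cost $\epsilon_{j+1}^{-1}$, harmless since only finiteness is needed in Proposition \ref{prop:en_est}), and then applies the energy estimate with $\phi_{j+1}$ to propagate $\norm{u(t)}_{\G^\sigma_{\phi_{j+1}}}\leq\norm{u_0}_{\G^\sigma_{\alpha+\epsilon}}$; since the total radius spent is $\sum_{j\geq 1}\epsilon_j\leq\epsilon$, the global solution lands in $\G^\sigma_\phi$. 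In your write-up the extra $\epsilon$ is dismissed as harmless slack (``the weight only decreases''), which is a symptom of the missing idea: without either this radius-for-derivative trade or a genuinely new smoothing estimate, the continuation argument cannot invoke Proposition \ref{prop:en_est} at all. (Minor, fixable points: restarting at $t=T^\ast$ needs $u(T^\ast)\in\G^\sigma_{\phi(T^\ast)}$, which is cleaner to obtain by restarting at $T^\ast-\tau/2$ using the uniform local existence time; and monotonicity down to $t=0$ should come out of the iteration itself rather than a limit $t_0\downarrow 0$.)
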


\begin{remark}[The meaning of the parameters $\alpha,\beta,\nu$]
 We may phrase the theorem in terms of the size of the initial condition as follows: Let $E>0$ be such that $\norm{u_0}_{\G^{\sigma}_{\alpha+\epsilon}}\leq E$. Then for $\beta<\frac{\nu^2}{2}-E$ and $\alpha>0$ we let $\phi(t)=\alpha+\beta t$ and obtain a global solution in $C^0\G^\sigma_\phi$ with probability $1-e^{-2\frac{\alpha \beta}{\nu^2}}$. 
 
 Put differently, for any size $E$ of initial data and any probability $p\in[0,1)$, for a sufficiently large parameter $\nu$ and sufficiently smooth initial data (i.e.\ $\alpha$ sufficiently large) we obtain global solutions with probability at least $p$.
 
 \emph{More precisely, since we may always choose $\beta\sim\nu^2$, the diffusion constant $\nu$ governs the rate of growth of the radius of analyticity, whereas the level of regularity $\alpha$ increases the likelihood of obtaining a global solution. Note that with a fixed $\alpha$, varying $\nu$ (and $\beta<\frac{\nu^2}{2}$) alone, we can only guarantee global solutions with probability $1-e^{-\alpha}$.}
 
 We note that by construction of the space $\G^\sigma_\phi$, the space regularity (or Gevrey index) of the solution actually increases with the passage of time.
\end{remark}

\begin{proof}[Strategy of Proof]\label{pf:strat-proof}
The above Lemma \ref{lem:drift} for Brownian motion with drift will allow us to work pathwise, where we combine our local existence and energy estimates. The challenge here is that the energy estimates require finiteness of a ``higher'' norm than the one for which they give monotonicity, i.e.\ we will have to control two levels of regularity. More precisely, in order to show that the $\G^\sigma_\phi$ norm is monotonically decreasing on a given time interval, we have to guarantee that the $\G^{\sigma+1}_\phi$ norm remains finite on that same interval. One sees readily that this cannot be done using just the local well-posedness theory, since a priori the higher energy norm will only be controlled for a shorter amount of time. 

The key point here is that -- thanks to our Gevrey setting -- we can achieve what we need by just an $\epsilon$ loss in regularity: note that we have the embeddings
\begin{equation}\label{eq:G-emb-gen}
 \norm{v}_{\G^{\rho}_\gamma}\leq \norm{v}_{\G^{\rho}_{\gamma+\delta}} \text{ and } \norm{v}_{\G^{\rho+1}_\gamma}\leq\delta^{-1}\norm{v}_{\G^\rho_{\gamma+\delta}}
\end{equation} 
for any $\delta,\gamma,\rho>0$. Then iteratively (losing a fraction of the original $\epsilon$ at every step) we will be able to demonstrate the argument.
\end{proof}

\begin{proof}[Proof of Theorem \ref{thm:main_full}]
By Lemma \ref{lem:drift}, with probability $1-e^{-2\frac{\alpha \beta}{\nu^2}}$ we have that $\phi(t)-\nu W_t\geq 0$ for all $t\geq 0$. We may and will assume from now on that such a path has been chosen, and argue pathwise.

In view of iterating local well-posedness and energy estimates, set $\epsilon_j:=2^{-j}\epsilon$ and define 
\begin{equation*}
 \phi_0:=\phi+\epsilon,\qquad \phi_{j+1}:=\phi_j-\epsilon_{j+1},\quad j\geq 0. 
\end{equation*}
We note that
\begin{equation}\label{eq:phi-choice}
 \phi_0\geq\phi_j\geq\phi_{j+1}\geq \phi,\quad j\in\N.
\end{equation}

\noindent\emph{Base Case.} For $u_0$ as in \eqref{eq:init_cond1} we obtain a local solution $u\in C^0([0,T],\G^\sigma_{\phi_0})$ on a time interval $[0,T]$. Then from the embeddings \eqref{eq:G-emb-gen} it follows that $u\in C^0([0,T],\G^{\sigma}_{\phi_1})$, and $u$ also remains bounded in $C^0([0,T],\G^{\sigma+1}_{\phi_1})$: for $t\in[0,T]$ we have
\begin{equation*}
 \norm{u(t)}_{\G^{\sigma+1}_{\phi_1}}\leq\epsilon_1^{-1}\norm{u(t)}_{\G^\sigma_{\phi_0}}.
\end{equation*} 
Moreover, from the energy estimates in Proposition \ref{prop:en_est} we deduce the monotonicity of $\norm{u(t)}_{\G^\sigma_{\phi_1}}$, i.e.\ that
\begin{equation*}
 \norm{u(t)}_{\G^\sigma_{\phi_1}}\leq\norm{u_0}_{\G^\sigma_{\phi_1(0)}}\leq\norm{u_0}_{\G^\sigma_{\alpha+\epsilon}},\quad 0\leq t\leq T.
\end{equation*}

\noindent\emph{Inductive Step.} We iterate the above procedure: in the $j$-th step ($j\in\N$) we start with a solution $u\in C^0([0,jT],\G^\sigma_{\phi_j})$ defined on a time interval $[0,jT]$ with $$\norm{u(t)}_{\G^\sigma_{\phi_j}}\leq\norm{u_0}_{\G^\sigma_{\alpha+\epsilon}},\quad 0\leq t\leq jT.$$

By construction of $\phi_j$ (see \eqref{eq:phi-choice}) we can invoke our local well-posedness in Proposition \ref{prop:local}, and uniquely extend this solution in $C^0([0,(j+1)T],\G^\sigma_{\phi_{j}})$ to a time interval at least $[0,(j+1)T]$. As above, the Gevrey embedding \eqref{eq:G-emb-gen} guarantees that
\begin{equation*}
 \norm{u(t)}_{\G^{\sigma+1}_{\phi_{j+1}}}\leq\epsilon_j^{-1}\norm{u(t)}_{\G^\sigma_{\phi_j}}<+\infty,\quad jT\leq t\leq (j+1)T.
\end{equation*}
From the energy estimates in Proposition \ref{prop:en_est} we then deduce that for $t\in[jT,(j+1)T]$ we have
\begin{equation*}
 \norm{u(t)}_{\G^{\sigma}_{\phi_{j+1}}}\leq \norm{u(jT)}_{\G^{\sigma}_{\phi_{j+1}(jT)}}\leq\norm{u(jT)}_{\G^{\sigma}_{\phi_{j}(jT)}}\leq\norm{u_0}_{\G^{\sigma}_{\alpha+\epsilon}}.
\end{equation*}
This shows that we have constructed $u\in C^0([0,(j+1)T],\G^\sigma_{\phi_{j+1}})$ with
\begin{equation*}
 \norm{u(t)}_{\G^{\sigma}_{\phi_{j+1}}}\leq\norm{u_0}_{\G^{\sigma}_{\alpha+\epsilon}}\quad \text{ for }0\leq t\leq (j+1)T,
\end{equation*}
which concludes the inductive step.

This iteration yields a global solution $u\in C^0([0,\infty),\G^\sigma_\phi)$ as claimed. The monotonicity of $t\mapsto\norm{u(t)}_{\G^\sigma_\phi}$ is a direct consequence of the energy estimates in Proposition \ref{prop:en_est}.

\noindent\emph{Pathwise Uniqueness.} The pathwise uniqueness is inherent in our construction: For each $t\mapsto W_t(\omega)$ we have a contraction mapping principle (Proposition \ref{prop:local}) that yields a locally unique solution.
\end{proof}

Transforming back to the original variable $\theta(t)=\Gamma^{-1}u(t)$ we obtain the corresponding statement for the original equation \eqref{eq:SQG-prob}: By construction, whenever $\phi(t)-\nu W_t\geq 0$, the mapping $\Gamma^{-1}=e^{\nu W_t\abs{\nabla}^s}:\G^\sigma_\phi\to \dot{H}^{\sigma s}$ is bounded:
\begin{equation}
 \norm{e^{\nu W_t\abs{\nabla}^s}u(t)}_{\dot{H}^{\sigma s}}=\norm{e^{(\nu W_t-\phi(t))\abs{\nabla}^s}e^{\phi(t)\abs{\nabla}^s}u(t)}_{\dot{H}^{\sigma s}}\leq\norm{e^{\phi(t)\abs{\nabla}^s}u(t)}_{\dot{H}^{\sigma s}}=\norm{u(t)}_{\G^\sigma_\phi}.
\end{equation}
This demonstrates the following
\begin{corollary}\label{cor:main}
 Let $\theta_0$ with $$\norm{\theta_0}_{\G^\sigma_{\alpha+\epsilon}}\leq \frac{\nu^2}{2}-\beta$$ with $\alpha,\beta,\epsilon,\sigma >0$ as in Theorem \ref{thm:main_full} above. Then with probability at least $1-e^{-2\frac{\alpha \beta}{\nu^2}}$ we obtain a unique global solution $\theta\in C^0\dot{H}^{\sigma s}$ to the original stochastic SQG equation \eqref{eq:SQG-prob},
 \begin{equation*}
 d\theta +R^\perp\theta\cdot\nabla\theta dt=\nu\abs{\nabla}^s\theta\, dW_t,\quad \theta(0)=\theta_0.
\end{equation*}
Furthermore, the mapping $t\mapsto\norm{\theta(t)}_{\dot{H}^{\sigma s}}$ is pathwise monotonically decreasing with
\begin{equation*}
  \norm{\theta(t)}_{\dot{H}^{\sigma s}}\leq \norm{\theta_0}_{\G^\sigma_{\alpha+\epsilon}}.
\end{equation*}
\end{corollary}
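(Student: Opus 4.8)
The plan is to read the Corollary off Theorem~\ref{thm:main_full} by undoing the substitution $u=\Gamma\theta$ of Section~\ref{sec:transf}, working pathwise on the event where the drift bound of Lemma~\ref{lem:drift} is in force. First I would fix a Brownian path in
\[
 \Omega_0:=\bigl\{\,\phi(t)-\nu W_t\ge 0\ \text{for every }t\ge 0\,\bigr\},
\]
which by Lemma~\ref{lem:drift} (applied to $W_t-\tfrac{\beta}{\nu}t$ at level $\tfrac{\alpha}{\nu}$) has probability at least $1-e^{-2\alpha\beta/\nu^2}$. Since $u_0:=\theta_0$ satisfies \eqref{eq:init_cond1}, Theorem~\ref{thm:main_full} yields along this path a pathwise unique global solution $u\in C^0([0,\infty),\G^\sigma_\phi)$ of \eqref{eq:SQG-det} with $t\mapsto\norm{u(t)}_{\G^\sigma_\phi}$ non-increasing. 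I would then define $\theta(t):=\Gamma(t)^{-1}u(t)=e^{\nu W_t\abs{\nabla}^s}u(t)$ and verify it is the desired solution of \eqref{eq:SQG-prob}.

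The verification that $\theta$ solves \eqref{eq:SQG-prob} is the computation of Section~\ref{sec:transf} run in reverse. The one new ingredient is It\^o's formula for the inverse propagator, $d\Gamma^{-1}=\nu\abs{\nabla}^s\Gamma^{-1}\,dW_t+\tfrac{1}{2}\nu^2\abs{\nabla}^{2s}\Gamma^{-1}\,dt$. Because \eqref{eq:SQG-det} has \emph{no} martingale term, the cross-variation $(d\Gamma^{-1})(du)$ vanishes, so It\^o's product rule together with $[\Gamma^{-1},\abs{\nabla}^s]=0$ and $\Gamma^{-1}B(u,u)=R^\perp\theta\cdot\nabla\theta$ gives
\begin{align*}
 d\theta
 &=(d\Gamma^{-1})\,u+\Gamma^{-1}\,du\\
 &=\Bigl(\nu\abs{\nabla}^s\theta\,dW_t+\tfrac{1}{2}\nu^2\abs{\nabla}^{2s}\theta\,dt\Bigr)+\Bigl(-R^\perp\theta\cdot\nabla\theta\,dt-\tfrac{1}{2}\nu^2\abs{\nabla}^{2s}\theta\,dt\Bigr),
\end{align*}
that is, $d\theta+R^\perp\theta\cdot\nabla\theta\,dt=\nu\abs{\nabla}^s\theta\,dW_t$, with $\theta(0)=u_0=\theta_0$ since $W_0=0$. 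Since $u$ is $\G^\sigma_\phi$-valued and the operators involved are commuting Fourier multipliers, making this precise (e.g.\ in integrated form tested against smooth functions) is routine, and adaptedness of $\theta$ is inherited from that of $u$ and $W$.

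For the quantitative statement I would use the factorization, valid on $\Omega_0$,
\[
 e^{\nu W_t\abs{\nabla}^s}=e^{-(\phi(t)-\nu W_t)\abs{\nabla}^s}\,e^{\phi(t)\abs{\nabla}^s},
\]
whose first factor has Fourier symbol in $(0,1]$ and is hence a contraction on $\dot H^{\sigma s}$ (all functions here being mean zero). Then, exactly as in the display preceding the statement,
\[
 \norm{\theta(t)}_{\dot H^{\sigma s}}=\norm{e^{\nu W_t\abs{\nabla}^s}u(t)}_{\dot H^{\sigma s}}\le\norm{u(t)}_{\G^\sigma_\phi}\le\norm{u_0}_{\G^\sigma_\phi}\le\norm{\theta_0}_{\G^\sigma_{\alpha+\epsilon}},
\]
using the monotonicity from Theorem~\ref{thm:main_full} and the embedding \eqref{eq:G-emb-gen}; the genuinely monotone pathwise quantity is $\norm{e^{(\phi(t)-\nu W_t)\abs{\nabla}^s}\theta(t)}_{\dot H^{\sigma s}}=\norm{u(t)}_{\G^\sigma_\phi}$, which dominates $\norm{\theta(t)}_{\dot H^{\sigma s}}$. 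Continuity $\theta\in C^0([0,\infty),\dot H^{\sigma s})$ follows from $u\in C^0([0,\infty),\G^\sigma_\phi)\hookrightarrow C^0([0,\infty),\dot H^{\sigma s})$, continuity of $t\mapsto W_t$, and the same factorization, whose outer multiplier is uniformly bounded on compact time intervals. Pathwise uniqueness transfers immediately: on $\Omega_0$ the map $\theta\mapsto\Gamma\theta$ is a bijection between solutions of \eqref{eq:SQG-prob} in $C^0\dot H^{\sigma s}$ and solutions of \eqref{eq:SQG-det} in $C^0\G^\sigma_\phi$, so two solutions of \eqref{eq:SQG-prob} must map to the same $u$ and therefore coincide.

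I expect the only real difficulty to be bookkeeping rather than analysis: one has to be sure the It\^o product rule and the commutation identities are applied in a space where every term (in particular $\abs{\nabla}^{2s}\theta$ and the bilinear term) is well defined — which the Gevrey regularity from Theorem~\ref{thm:main_full} guarantees — and one has to track the $t$-dependence hidden in $\G^\sigma_\phi$ so that it does not obstruct the continuity-in-time claim for $\theta$, for which the factorization $e^{\nu W_t\abs{\nabla}^s}=e^{-(\phi(t)-\nu W_t)\abs{\nabla}^s}e^{\phi(t)\abs{\nabla}^s}$ is precisely the right device.
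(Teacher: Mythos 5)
Your proposal is correct and takes essentially the same route as the paper: transform back via $\theta=\Gamma^{-1}u$, use that on the event $\{\phi(t)-\nu W_t\ge 0\}$ the multiplier $e^{(\nu W_t-\phi(t))\abs{\nabla}^s}$ is a contraction so $\norm{\theta(t)}_{\dot H^{\sigma s}}\le\norm{u(t)}_{\G^\sigma_\phi}$, and inherit existence, the bound, and pathwise uniqueness from Theorem \ref{thm:main_full}. You in fact supply more detail than the paper (the reverse It\^o computation and the remark that the genuinely monotone quantity is $\norm{u(t)}_{\G^\sigma_\phi}$), which is consistent with what the paper's one-display proof implicitly relies on.
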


\newpage
\bibliographystyle{alpha}
\bibliography{prob_sqg-refs.bib}

\begin{thebibliography}{CMT94}

\bibitem[BSV]{BSV-SQG}
T.~Buckmaster, S.~Shkoller, and V.~Vicol.
\newblock Nonuniqueness of weak solutions to the sqg equation.
\newblock {\em Comm. Pure Appl. Math., to appear}.

\bibitem[CC04]{MR2084005}
A.~C\'ordoba and D.~C\'ordoba.
\newblock A maximum principle applied to quasi-geostrophic equations.
\newblock {\em Comm. Math. Phys.}, 249(3):511--528, 2004.

\bibitem[CG15]{Chouk}
K.~Chouk and M.~Gubinelli.
\newblock Nonlinear pdes with modulated dispersion i: Nonlinear schr\"odinger
  equations.
\newblock {\em Communications in Partial Differential Equations},
  40(11):2047--2081, 2015.

\bibitem[Cha03]{MR1962043}
J.~Chae, D.and~Lee.
\newblock Global well-posedness in the super-critical dissipative
  quasi-geostrophic equations.
\newblock {\em Comm. Math. Phys.}, 233(2):297--311, 2003.

\bibitem[CMT94]{CoMaTa1994}
P.~Constantin, A.J. Majda, and E.~Tabak.
\newblock Formation of strong fronts in the {$2$}-{D} quasigeostrophic thermal
  active scalar.
\newblock {\em Nonlinearity}, 7(6):1495--1533, 1994.

\bibitem[Con07]{MR2338368}
P.~Constantin.
\newblock On the {E}uler equations of incompressible fluids.
\newblock {\em Bull. Amer. Math. Soc. (N.S.)}, 44(4):603--621, 2007.

\bibitem[CV10]{CaVa2010}
L.A. Caffarelli and A.~Vasseur.
\newblock Drift diffusion equations with fractional diffusion and the
  quasi-geostrophic equation.
\newblock {\em Ann. of Math. (2)}, 171(3):1903--1930, 2010.

\bibitem[CW99]{MR1709781}
P.~Constantin and J.~Wu.
\newblock Behavior of solutions of 2{D} quasi-geostrophic equations.
\newblock {\em SIAM J. Math. Anal.}, 30(5):937--948, 1999.

\bibitem[dBD02]{MR1947366}
A.~de~Bouard and A.~Debussche.
\newblock Finite-time blow-up in the additive supercritical stochastic
  nonlinear {S}chr\"odinger equation: the real noise case.
\newblock In {\em The legacy of the inverse scattering transform in applied
  mathematics ({S}outh {H}adley, {MA}, 2001)}, volume 301 of {\em Contemp.
  Math.}, pages 183--194. Amer. Math. Soc., Providence, RI, 2002.

\bibitem[dBD05]{debouard2005}
A.~de~Bouard and A.~Debussche.
\newblock Blow-up for the stochastic nonlinear schr{\"o}dinger equation with
  multiplicative noise.
\newblock {\em Ann. Probab.}, 33(3):1078--1110, 05 2005.

\bibitem[DT11]{DEBUSSCHE2011363}
A.~Debussche and Y~Tsutsumi.
\newblock 1d quintic nonlinear {S}chr\"odinger equation with white noise
  dispersion.
\newblock {\em Journal de {M}ath\'amatiques {P}ures et {A}ppliqu\'aes},
  96(4):363 -- 376, 2011.

\bibitem[FGP10]{MR2593276}
F.~Flandoli, M.~Gubinelli, and E.~Priola.
\newblock Well-posedness of the transport equation by stochastic perturbation.
\newblock {\em Invent. Math.}, 180(1):1--53, 2010.

\bibitem[Fla11]{MR2796837}
F.~Flandoli.
\newblock {\em Random perturbation of {PDE}s and fluid dynamic models}, volume
  2015 of {\em Lecture Notes in Mathematics}.
\newblock Springer, Heidelberg, 2011.
\newblock Lectures from the 40th Probability Summer School held in Saint-Flour,
  2010, \'Ecole d'\'Et\'e de Probabilit\'es de Saint-Flour. [Saint-Flour
  Probability Summer School].

\bibitem[FT89]{MR1026858}
C.~Foias and R.~Temam.
\newblock Gevrey class regularity for the solutions of the {N}avier-{S}tokes
  equations.
\newblock {\em J. Funct. Anal.}, 87(2):359--369, 1989.

\bibitem[GHV14]{MR3161482}
N.~E. Glatt-Holtz and V.~Vicol.
\newblock Local and global existence of smooth solutions for the stochastic
  {E}uler equations with multiplicative noise.
\newblock {\em Ann. Probab.}, 42(1):80--145, 2014.

\bibitem[KNV07]{KiNaVo2007}
A.~Kiselev, F.~Nazarov, and A.~Volberg.
\newblock Global well-posedness for the critical 2{D} dissipative
  quasi-geostrophic equation.
\newblock {\em Invent. Math.}, 167(3):445--453, 2007.

\bibitem[Mar08]{Ma2008}
F.~Marchand.
\newblock Existence and regularity of weak solutions to the quasi-geostrophic
  equations in the spaces {$L^p$} or {$\dot H^{-1/2}$}.
\newblock {\em Comm. Math. Phys.}, 277(1):45--67, 2008.

\bibitem[MS99]{MR1719695}
J.~C. Mattingly and Ya.~G. Sinai.
\newblock An elementary proof of the existence and uniqueness theorem for the
  {N}avier-{S}tokes equations.
\newblock {\em Commun. Contemp. Math.}, 1(4):497--516, 1999.

\bibitem[Ped82]{Pe1982}
J.~Pedlosky.
\newblock {\em Geophysical Fluid Dynamics}.
\newblock Springer Verlag, 1982.

\bibitem[Res92]{MR1181423}
S.~Resnick.
\newblock {\em Adventures in stochastic processes}.
\newblock Birkh\"auser Boston, Inc., Boston, MA, 1992.

\bibitem[Res95]{Resnick95}
S.G. Resnick.
\newblock {\em Dynamical problems in non-linear advective partial differential
  equations}.
\newblock ProQuest LLC, Ann Arbor, MI, 1995.
\newblock Thesis (Ph.D.)--The University of Chicago.

\bibitem[Sch93]{MR1202675}
M.~Scheutzow.
\newblock Stabilization and destabilization by noise in the plane.
\newblock {\em Stochastic Anal. Appl.}, 11(1):97--113, 1993.

\end{thebibliography}

\end{document}